\DeclarePairedDelimiterX{\Iintv}[1]{\llbracket}{\rrbracket}{\iintvargs{#1}}\NewDocumentCommand{\iintvargs}{>{\SplitArgument{1}{,}}m}
{\iintvargsaux#1} %
\NewDocumentCommand{\iintvargsaux}{mm} {#1\mkern1.5mu,\mkern1.5mu#2}
\newtheorem{theorem}{Theorem}[section]
\newtheorem{corollary}[theorem]{Corollary}
\newtheorem{definition}[theorem]{Definition}
\newtheorem{lemma}[theorem]{Lemma}
\newtheorem{proposition}[theorem]{Proposition}
\newtheorem{prop}[theorem]{Proposition}
\newtheorem{remark}[theorem]{Remark}
\newcommand{\1}{\mathds{1}}
\newcommand{\er}{\mathbb R}
\newcommand{\Bb}{\mathcal B}
\newcommand{\Cc}{\mathcal C}
\newcommand{\Ff}{\mathcal F}
\newcommand{\Ll}{\mathcal L}
\newcommand{\Pp}{\mathcal P}
\newcommand{\Xx}{\mathcal X}
\newcommand{\EE}{\mathbb E}
\newcommand{\PP}{\mathbb P}
\newcommand{\hypo}{({\bf A0})\xspace}
\newcommand{\hypi}{({\bf A1})\xspace}
\newcommand{\hypii}{({\bf A2})\xspace}
\newcommand{\hypiii}{({\bf A3})\xspace}
\numberwithin{equation}{section}
\title{Rate of propagation of chaos for diffusive stochastic particle systems via Girsanov transformation.\footnote{Preliminary version, July 2019}}
\author{Jean-Fran\c cois JABIR\footnote{School of Mathematics, University of Edinburgh, Scotland; jjabir@ed.ac.uk}\footnote{Higher School of Economics, National Research University, Russia Federation; jjabir@hse.ru}}
\date{\empty}
\begin{document}
 \maketitle

\paragraph{Abstract}: This paper focus on investigating the explicit rate of convergence for the propagation of chaos, in a pathwise sense a family of interacting stochastic particle related to some Brownian driven McKean-Vlasov dynamics. Precisely the McKean form of nonlinearity is concentrated on a path dependent drift component and satisfies a particular sub-gaussian moment control. Such control enables to derive a uniform estimate of the cost in terms of exponential martingale between the particle and its McKean/mean-field limit system which in turn provide an optimal rate of propagation of chaos in terms of the total variation distance. % These control enable to control, in an appropriate way, the exponential martingale mapping the stochastic particle system toward the $N$ independent copies.%that enables to perform a uniform control of   satisfying a partial .  drift components.
 % The main argument to achieve explicit rates of propagation of chaos then relies on estimating the propagation of chaos through the moment control of the .
  As a by-product, we deepen some recent propagation of chaos results due to Lacker \cite{Lacker-18}
  and provides a partial stochastic interpretation of the entropy control technique introduced in Jabin and Wang \cite{JabinWang-17}.

\noindent
\textbf{Key words}: Quantitative propagation of chaos; McKean-Vlasov dynamics; Girsanov transformation.

\noindent
\textbf{AMS MSC 2010}: 60K35; 60H30; 60H10; 60J60.

 \section{Introduction}
Hereafter, we are interested in the explicit rate at which a system of $N$-interacting stochastic particle $(X^{1,N},X^{2,N},\dots,X^{N,N})$ satisfying
 \begin{equation}
 \label{eq:particle_intro}
 \left\{
\begin{aligned}
&X^{i,N}_t=X^i_0+\int_{0}^{t}c(s,(X_r)_{0\leq r\leq s})\,ds\\
&\hspace{1cm}+\int_0^t A(s,(X^{i,N}_r)_{0\leq r\leq s})\Big(B(s,(X^{i,N}_r)_{0\leq r\leq s};\overline{\mu}^{N,N}_s)\,ds+\,dW^i_s\Big),\,i=1,\cdots,N,\,0\leq t\leq T,\\
&\overline{\mu}^{N,N}_t=\frac{1}{N}\sum_{j=1}^N\delta_{\{(X^{j,N}_r)_{0\leq r\leq t}\}},\,X^i_0\sim \mu^0,\,(X^1_0,X^2_0,\dots,X^N_0)\,\text{independent},
\end{aligned}
\right.
\end{equation}
propagates chaos. The particle system is defined up to some finite time horizon $0<T<\infty$, with a given initial distribution on $\er^d$ and $W^1,\dots,W^N$ a sequence of independent $m$-dimensional standard Brownian motions ($m\geq 1$). The system of SDEs \eqref{eq:particle_intro} mainly endows an non-anticipative diffusion component $A$ and two non-anticipative drift components $c$ and $A B$ (resulting from the product of $A$ and $B$) and issued from some given progressively measurable mappings:
\[
c:(t,x)\in[0,T]\times\Cc([0,T];\er^d) \mapsto c(t,x)=c\big(t,(\omega_{\theta\wedge t}(x))_{0\leq \theta\leq T}\big)\in \er^{d},
\]
\[
A:(t,x)\in[0,T]\times\Cc([0,T];\er^d) \mapsto A(t,x)=A\big(t,(\omega_{\theta\wedge t}(x))_{0\leq \theta\leq T}\big)\in\er^{d\times m},
\]
\[
B:(t,x,P)\in[0,T]\times\Cc([0,T];\er^d)\times \Pp(\Cc([0,T];\er^d))\mapsto B(t,x;P)=B\big(t,(\omega_{\theta\wedge t}(x))_{0\leq \theta\leq T},P\circ((\omega_{\theta\wedge t})_{0\leq \theta\leq T})^{-1}\big)\in\er^m,
\]
for $(\omega_t)_{0\leq t\leq T}$ the canonical process on $\Cc([0,T];\er^d)$. In particular, the interaction between particles are described by the component $B$ whose values range in the same dimension as of the Brownian diffusion driving each elements of \eqref{eq:particle_intro}.

The propagation of chaos property will be here mainly understood for the law of the paths of \eqref{eq:particle_intro}; namely in the sense where, for a fixed number of particles $X^{1,N},\dots,X^{k,N}$ as the overall number $N$ of interacting particles increases, the chaos (independency) of the initial $X^1_0,\dots,X^N_0$ and diffusive $(W^1_t)_{t\geq 0},\,\dots,(W^N_t)_{t\geq 0}$ inputs of the system is restored in the particle dynamics of the group of particles yielding to the generic dynamic:
\begin{equation}
 \label{eq:McKeanVlasov_intro}
\left\{
\begin{aligned}
&X^{\infty}_t=X_0+ \int_{0}^{t}c(s,(X^{\infty}_r)_{0\leq r\leq s})\,ds\\
&\hspace{1cm}+\int_0^t A(s,(X^{\infty}_r)_{0\leq r\leq s})\Big(B(s,(X^{\infty}_r)_{0\leq r\leq s};\Ll((X^{\infty}_r)_{0\leq r\leq s}))\,ds+\,dW_s\Big),\,0\leq t\leq T,\\
&\Ll((X^{\infty}_r)_{0\leq r\leq t}))=\text{Law of }((X^\infty_r)_{0\leq r\leq t}),\,X_0\sim \mu^0,
\end{aligned}
\right.
\end{equation}
and the weak limit behaviour:
\[
\Ll((X^{1,N}_t,\dots,X^{k,N}_t)_{0\leq t\leq T})\underset{N\rightarrow \infty}{\longrightarrow} \Ll((X^{\infty}_t)_{0\leq t\leq T})\otimes\dots\otimes \Ll((X^{\infty}_t)_{0\leq t\leq T}).
\]
Due to the exchangeability of the particle system, this property is further equivalent to
\[
\Ll\Big(\frac{1}{N}\sum_{j=1}^N\delta_{\{(X^{k,N}_t)_{0\leq t\leq T} \}}\Big)\underset{N\longrightarrow \infty}{\longrightarrow}\Ll((X_t)_{0\leq t\leq T})\,\text{in the weak sense on }\Pp(\Cc([0,T];\er^d)),
\]
 whenever $k\geq 2$, [Sznitman \cite{Sznitman-89}, Proposition 2.2].
 Particular cases of interest for \eqref{eq:McKeanVlasov_intro} that will be discussed later are the situations where the interaction kernel is of the form
 \[
\int b(t,x,\tilde{x})\,\nu(d\tilde{x}),\,t\geq 0,\,x\in\er^d,\,\nu\in\Pp(\er^d),\,b:[0,\infty)\times\er^d\times\er^d\rightarrow \er^m\,\text{bounded},
 \]
 and where the diffusion component $A$ is either a $d\times d$-valued ($m=d$) bounded and uniformly elliptic matrix or, for $d=2m$, is of the form:
 \begin{equation}\label{eq:LangevinMcKean}
   A=\begin{pmatrix}
     0 & 0\\
     0 & \sigma\\
   \end{pmatrix}
 \end{equation}
More precisely, the former case corresponds to the prototypical McKean-Vlasov dynamic:
\begin{equation}\label{eq:ProtoMcKeanVlasovintro}
dX_t=\Big(\int b(t,(Y_t,V_t),(y,v))\,\mu(t,dx)\Big)\,dt+\sigma(t,X_t)\,dW_t,\,\mu(t)=\Ll(X_t),\,X_0\sim \mu_0,
\end{equation}
while the later case can be further particularized into a Langevin dynamic $X_t=(Y_t,V_t)\in\er^m\times\er^m$ satisfying:
 \begin{equation}\label{eq:LangevinMcKeanintro}
\left\{
\begin{aligned}
&dY_t=V_t\,dt,\,\,(Y_0,V_0)\sim\mu_0\\
&dV_t=\Big(\int b(t,(Y_t,V_t),(y,v))\,\mu(t,dy,dv)\Big)\,dt+\sigma(t,X_t)\,dW_t,\,\mu(t)=\Ll(Y_t,V_t).
\end{aligned}
\right.
\end{equation}

 The propagation of chaos property of stochastic interacting particle systems has received over of the years a tremendous amount of attention since its initial introduction in statistical physics (Kac \cite{Kac-56}) for its applications for the probabilistic interpretation of nonlinear pdes (McKean \cite{McKean-66}, \cite{McKean-67}; see the surveys Bossy \cite{Bossy-03}, Jabin and Wang \cite{JabinWang-17} for two global overviews on the theoretical and practical aspects related to McKean-Vlasov or McKean SDEs and related particles approximations) and in its modern utilization for the description of interacting economical agents models and game theory (see e.g. Kolokolstov \cite{Kolokolstov-10}, Carmona and Delarue \cite{CarDel-18a}, \cite{CarDel-18b} and references therein).

 The central result of the present paper (Theorem \ref{mainthm1:LinearCaseTV}) establishes an explicit (and optimal) rate of convergence for the propagation of chaos property between \eqref{eq:particle_intro} and \eqref{eq:McKeanVlasov_intro} in terms of the total variation distance:
 \[
 \Vert\mu-\nu\Vert_{TV}=\sup_{A\in\Bb(\Cc([0,T];\er^d))}\left|\int \1_{\{x\in A\}}\mu(dx)-\int \1_{\{x\in A\}}\nu(dx)\right|,\,\mu,\nu\in\Pp(\Cc([0,T];\er^d)),
 \]
 Mainly this result rests on a generic criterion (see the condition $(\mathbf{C})$ below) which does not directly relies on some regularity properties of $B$ but rather ensure the control of some moments of the Doleans-Dale exponential martingale related to the Girsanov transformation which maps the $N$-system of McKean SDEs \eqref{eq:McKeanVlasovParticle} into the $N$-interacting particle system \eqref{eq:Nparticles}.
  
 The core idea of the main result of the present paper is based on a probabilistic interpretation of the proof techniques introduced in Jabin and Wang \cite{JabinWang-16} for the propagation of chaos in entropy (and by extension in total variation) of the one time-marginal distributions of McKean-Vlasov dynamics of the form \eqref{eq:LangevinMcKeanintro} with bounded interaction. More generally, the authors designed a guideline for establishing a sharp quantitative estimate of the propagation of chaos, in terms of a vanishing initial chaos (the particle being initially correlated) and (possibly) vanishing diffusion, through a powerful combination of pde analysis, entropy estimate and combinatorics. This guideline, combined with large deviations principles, was extended to the instance of McKean-Vlasov dynamics \eqref{eq:McKeanVlasov_intro} endowed with singular interaction kernels of the form $b\in W^{-1,\infty}$ (i.e. $b^{(k)}(x)=\sum_{l}\partial_{x_l} G^{k,l}(x),\,G\in L^\infty$).
 
  Linked to the probabilistic interpretation of the proof techniques of \cite{JabinWang-16}, let us mention that a (non-explicit) propagation of chaos property in entropy and in total variation distance was recently considered in Lacker \cite{Lacker-18} for the McKean SDE:
 \begin{equation}
\label{eq:McKeanVlasovPastDepend}
z_t=Z_0+\int_0^t B\big(s,(z_r)_{0\leq r\leq s},\Ll((z_r)_{0\leq r\leq s})\big)\,ds+\int_{0}^{t}\sigma(s,(z_r)_{0\leq r\leq s})\,dW_s,\,0\leq t\leq T.
\end{equation}
and its related particle approximation:
 \begin{equation}
\label{eq:ParticlePastDepend}
z^{i,N}_t=Z^i_0+\int_0^t B\big(s,(z^{i,N}_r)_{0\leq r\leq s},\frac{1}{N}\sum_{j=1}^N\delta_{\{(z^{j,N}_r)_{0\leq r\leq s}\}}\big)\,ds+\int_{0}^{t}\sigma(s,(z^{i,N}_r)_{0\leq r\leq s})\,dW^i_s,\,0\leq t\leq T,
\end{equation}
assuming the uniform ellipticity of $\sigma$, the boundedness and Lipschitz continuity (in terms of the total variation distance) of $\sigma^{-1}B$ and the continuity of
\[
\nu\in\Pp(\Cc([0,T];\er^d))\mapsto \int_{\Cc([0,T];\er^d)}\int_{0}^{T} \left|\sigma^{-1}(t,z)\left( B(t,z,\mu)- B(t,z,\nu)\right)\right|^2\,dt\,\nu(dz),
\]

The core idea of \cite{Lacker-18} is closely connected to the original idea introduced in Mishura and Veretennikov \cite{MisVer-16} (from which the present paper owns also its initial step) linking the measurement of the total variation distance between two It\^o's diffusion processes
in terms of the Girsanov transformation between the two processes and its applications for the weak uniqueness problems of the McKean SDEs \eqref{eq:ProtoMcKeanVlasovintro}. (It should also be noticed that the idea of establishing propagation of chaos through the Girsanov transformation was already hinted in the preprint Veretennikov \cite{Veretennikov-18} almost at the same time as \cite{Lacker-18}.) The dynamics \eqref{eq:particle_intro} and \eqref{eq:McKeanVlasov_intro} considered hereafter present a extended version of \eqref{eq:McKeanVlasovPastDepend} and \eqref{eq:ParticlePastDepend} which enable to relax elliptic assumption on the diffusion coefficients and embed the case \eqref{eq:LangevinMcKeanintro}. Let also mention that, compared to \cite{Lacker-18}, the wellposed problems related to \eqref{eq:McKeanVlasov_intro} and \eqref{eq:particle_intro} will not be addressed hereafter (assumptions \hypi and \hypii) to rather focus on quantifying explicitly the related propagation of chaos property.

 The main result of this paper (Theorem \ref{mainthm1:LinearCaseTV}) is stated in Section \ref{sec:MainResults} and proved in Section \ref{sec:Proof}. Section \ref{sec:SufficientConditions} is dedicated to applications of this main result in the particular cases \eqref{eq:ProtoMcKeanVlasovintro} and  \eqref{eq:LangevinMcKeanintro} (see corollaries \ref{coro:BoundedCase} and \ref{coro:KineticBoundedCase} respectively) and to exhibit a sufficient condition for the condition $(\mathbf{C})$ in terms of the second order differentiability of $\nu\mapsto B(t,x,\nu)$ (Proposition \ref{prop:DifferentiabilityCondition}). Although \eqref{eq:ProtoMcKeanVlasovintro} and \eqref{eq:LangevinMcKeanintro} only presents applications of Theorem \ref{mainthm1:LinearCaseTV} where the interaction is bounded, more singular situations should be handled by cut-smoothing techniques. The particular case of conditional McKean Lagrangian models (see Bossy, Jabir and Talay \cite{jabir-11a}), which initially motivated the present work, will be discussed in \cite{JabMen-19}.

\textbf{Assumptions}:
(As before, $(A B)$ denotes the functional on $[0,\infty)\times\Cc([0,\infty);\er^d)\times \Pp(\Cc([0,\infty);\er^d))$ resulting from the product between the diffusion $A$ and drift component $B$ in \eqref{eq:McKeanVlasovParticle} and \eqref{eq:McKeanVlasov_intro}.)

\noindent
\hypo For any $\mu_0$ on $\er^d$, $0\leq T<\infty$, there exists a unique weak solution $(\Xx_t)_{0\leq t\leq T}$ satisfying the SDE:
 \begin{equation}\label{eq:IntermediateSDE}
\left\{
\begin{aligned}
&\Xx_t=X_0+ \int_{0}^{t}c(s,(\Xx_r)_{0\leq r\leq s})\,ds+\int_0^t A(s,(\Xx_r)_{0\leq r\leq s})\,dW_s,\,0\leq t\leq T,\\
&\Xx_0\sim \mu^0.
\end{aligned}
\right.
\end{equation}
and for $(\Xx^{1}_t)_{0\leq t\leq T},\,\dots,(\Xx^{1}_t)_{0\leq t\leq T}$ a family of $N$ independent copies of $(\Xx_t)_{0\leq t\leq T}$, it holds that
$1\leq i\leq N$,
\[
\int_0^T\left|  \big (A B\big)\big(s,(\Xx^{i}_r)_{0\leq r\leq s};\nu^N_s\big)\right|^2\,ds<\infty,
\]
where $\nu^{N}_t=\frac{1}{N}\sum_{j=1}^{N}\delta_{\{(\Xx^j_r)_{0\leq r\leq t}\}}$.

\noindent
\hypi For any $\mu_0$, $0<T<\infty$, the SDE \eqref{eq:McKeanVlasov_intro} admits a unique weak solution $(X_t)_{t\geq 0}$ such that, almost surely,
\[
\int_0^T\left|\big(A B\big)(s,(X_r)_{0\leq r\leq s};\Ll((X_r)_{0\leq r\leq s}))\right|^2\,ds<\infty.
\]

\noindent
\hypii For any $\mu_0$, $0<T<\infty$, $N\geq 1$, the system of SDEs \eqref{eq:particle_intro} admits a unique weak solution
$\{(X^{i,N}_t)_{t\geq 0};\,1\leq i\leq N\}$ such that, a.s.
\[
\forall\,1\leq i\leq N,\,\int_0^T\left|\big(A B\big)(s,(X^{i,N}_r)_{0\leq r\leq s};\overline{\mu}^{N,N}_s)\right|^2\,ds<\infty,
\]
where $(\overline{\mu}^{N,N}_t)_{0\leq t\leq T}$ is the flow of (random) empirical measures given as in \eqref{eq:McKeanVlasov_intro}.

\begin{remark}
With the assumptions \hypi and \hypii, we deliberately leave aside the wellposedness problems of a weak solution to the $N$-interacting particle system \eqref{eq:particle_intro} and to the McKean SDE \eqref{eq:McKeanVlasov_intro} to rather focus on quantifying explicitly the related propagation of chaos property. Although not necessary, the assumption \hypo is used to ensure, in a simple way, the equivalency in law between \eqref{eq:particle_intro} and \eqref{eq:McKeanVlasov_intro}. Let us also mention that the assumptions on the weak uniqueness of \eqref{eq:particle_intro} and \eqref{eq:McKeanVlasov_intro} can be relaxed as long as there exist a solution to \eqref{eq:particle_intro} and a solution to \eqref{eq:McKeanVlasov_intro} for which \eqref{proofstp:i} hold.
\end{remark}

\textbf{Notation:}% For any given family of $N$ points of $\er^d$, $x_1,x_2,\dots,x_N$, ${\bf x}^N$ will denote the $\er^{dN}$-vector $(x_1,x_2,\dots,x_N)$ and $x^{\Iintv{1,k},N}$ the $\er^{kd}$ vector $(x_1,x_2,\dots,x_k)$.
For any integer $m\geq 1$, and any finite positive time horizon $T$, $\Cc([0,T];\er^{m})$ (respectively $\Cc([0,\infty);\er^{m})$) will denote the space of continuous functions defined on $[0,T]$ (resp. $[0,\infty)$) with values in $\er^m$ equipped with the uniform norm $\Vert x\Vert_{\Cc([0,T];\er^m)}=\max_{0\leq t\leq T}|x(t)|$ (resp. $\Vert x\Vert_{\Cc([0,\infty);\er^m)}=\max_{t\geq 0}|x(t)|\wedge 1)$. $\Pp(\Cc([0,T];\er^{m}))$ and $\Pp(\Cc([0,\infty);\er^{m}))$ will denote respectively the space of probability measures defined on $\Cc([0,T];\er^m)$ and on $\Cc([0,\infty);\er^m)$. Finally,
$\Vert~\Vert_{TV,(0,T)}$ will denote the total variation norm on $\Pp(\Cc([0,T];\er^{m}))$, that is (see e.g. Equation $(3.2.13)$ in Rachev \cite{Rachev-91}): for all $P_1,P_2$ on $\Pp(\Cc([0,T];\er^{m}))$
\[
\Vert P_1-P_2\Vert_{TV,(0,T)}=\sup_{A \in \Bb(\Cc([0,T];\er^m))}\left|P_1(A)-P_2(A)\right|,
\]
where $\Bb(\Cc([0,T];\er^m))$ denotes the Borel $\sigma$-algebra of $\Cc([0,T];\er^m)$. Whenever $P_1, P_2\in
\Pp(\Cc([0,\infty);\er^{m}))$ and $0<T<\infty$ is a finite time horizon, $\Vert P_1-P_2\Vert_{TV,(0,T)}$ will simply correspond to the total variation distance between the probability measures restrained to the sample space $(\Cc([0,T];\er^d),\Bb(\Cc([0,T];\er^d)))$.

\section{Main result}\label{sec:MainResults}
Let $(\Omega,\Ff,(\Ff_t;\,0\leq t\leq T),\PP)$ and $(\widetilde{\Omega},\widetilde{\Ff},(\widetilde{\Ff}_t;\,0\leq t\leq  T),\widetilde{\PP})$ be two (possibly different) filtered probability spaces under each of which are defined a collection of $(X^i_0,(W^i_t)_{0\leq t\leq T})$ and $(\widetilde{X}^i_0,(\widetilde{W}^i_t)_{0\leq t\leq T})$ of independent copies of $(X_0,(W_t)_{0\leq t\leq T})$. Then, under  \hypi and \hypii, consider a version of the particle system \eqref{eq:McKeanVlasovParticle} defined on $(\widetilde{\Omega},\widetilde{\Ff},(\widetilde{\Ff}_t;\,0\leq t\leq  T),\widetilde{\PP})$ as
\begin{equation}
 \label{eq:Nparticles}
 \left\{
\begin{aligned}
&X^{i,N}_t=\widetilde{X}^i_0+\int_{0}^{t}c(s,(X_r)_{r\leq s})\,ds\\
&\hspace{1cm}+\int_0^t A(s,(X^{i,N}_r)_{0\leq r\leq s})\Big(B(s,(X^{i,N}_r)_{0\leq r\leq s};\overline{\mu}^{N,N}_s)\,ds+d\widetilde{W}^i_s\Big),\,0\leq t\leq T,\,i=1,\cdots,N,\\
&\overline{\mu}^{N,N}_t=\frac{1}{N}\sum_{j=1}^N\delta_{\{(X^{j,N}_r)_{0\leq r\leq t}\}},\,\widetilde{X}^i_0\sim \mu^0,
\end{aligned}
\right.
\end{equation}
and a system of $N$-independent copies of \eqref{eq:McKeanVlasov_intro} defined on $(\Omega,\Ff,(\Ff_t;\,0\leq t\leq T),\PP)$ as
\begin{equation}
 \label{eq:McKeanVlasovParticle}
 \left\{
\begin{aligned}
&X^{i,\infty}_t=X^i_0+\int_{0}^{t}c(s,(X^{i,\infty}_r)_{r\leq s})\,ds\\
&\hspace{1cm}+\int_0^t A(s,(X^{i,\infty}_r)_{0\leq r\leq s})\Big(B(s,(X^{i,\infty}_r)_{0\leq r\leq s};\Ll((X^{i,\infty}_r)_{0\leq r\leq s}))\,ds+\,dW^i_s\Big),\\
&\mu^{i,\infty}(t)=\Ll((X^{i,\infty}_r)_{0\leq r\leq t}),\,X^i_0\sim \mu^0.
\end{aligned}
\right.
\end{equation}
As the assumption \hypii ensures the uniqueness of each component of the system \eqref{eq:McKeanVlasovParticle}, the distribution $\Ll((X^{i,\infty}_t)_{0\leq t\leq T})$ is the common for all component and equal to the one of \eqref{eq:McKeanVlasov_intro}; the index $i$ may be dropped. The superscript $\infty$ in \eqref{eq:McKeanVlasovParticle} will be used as a pointer to remind that \eqref{eq:McKeanVlasovParticle} is (at least heuristically) the suitable limit system of \eqref{eq:Nparticles}.

Our main result is given by the following theorem:

\begin{theorem}\label{mainthm1:LinearCaseTV} Assume that \hypi and \hypii hold. Assume also that the following condition $\mathbf{(C)}$ holds:
\begin{equation*}
\mathbf{(C)}\,\,\left\|\,\,
\begin{aligned}
&\text{There exists a constant }0<\beta<\infty\text{ such that for any }0<T_0<T<\infty,\,0<\delta<\infty,\text{and, for all integer }p\geq 1,\\
&\hspace{4cm}\EE_{\PP}\left[\left(\int_{T_0}^{(T_0+\delta)\wedge T}\left|  \triangle B^{i,N,\infty}_t\right|^{2}\,dt\right)^p\right]\leq \frac{p! \beta^{p}\delta^p}{N^p},\\
&\text{where}\,\triangle B^{N,\infty}_t=B(t,(X^{i,\infty}_r)_{0\leq r\leq t};\overline{\mu}^{N,\infty}_t)- B(t,(X^{i,\infty}_r)_{0\leq r\leq t};\Ll((X^{i,\infty}_r)_{0\leq r\leq t}),\\
&\overline{\mu}^{N,\infty}_t=\frac{1}{N}\sum_{j=1}^N\delta_{\{(X^{j,\infty}_r)_{0\leq r\leq t}\}}.
\end{aligned}
\right.
\end{equation*}
Then
\[
\Vert \Ll\big((X^{1,N}_t,X^{2,N}_t,\dots,X^{N,N}_t)_{0\leq t\leq T}\big)- \Ll\big((X^{1,\infty}_t,X^{2,\infty}_t,\dots,X^{N,\infty}_t)_{0\leq t\leq T}\big)\Vert_{TV,(0,T)}\leq C(1+\beta T)\sqrt{\frac{k}{N}},
\]
where $C$ is a constant only depending on $T$, $m$ and $\beta$.
\end{theorem}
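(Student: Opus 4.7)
The plan is to realize the transfer between the two path-space laws $\QQ^N:=\Ll((X^{i,N})_{1\le i\le N})$ and $\QQ^\infty:=\Ll(X^{\infty})^{\otimes N}$ by a Girsanov change of measure carried out on the probability space supporting the $N$ independent McKean--Vlasov copies $(X^{i,\infty})_{i\le N}$, to verify the Novikov integrability of the associated Doléans--Dade exponential via the factorial moment bound in $\mathbf{(C)}$, and finally to convert the resulting entropic cost into a total variation bound through Pinsker's inequality combined with a Han--type tensor subadditivity of relative entropy, which is what produces the $\sqrt{k/N}$ scaling after descending from the full $N$-tuple to a $k$-marginal.

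Concretely, on $(\Omega,\Ff,(\Ff_t),\PP)$ I would introduce
\[
Z^N_t=\exp\Bigl(\sum_{i=1}^{N}\int_{0}^{t}\triangle B^{i,N,\infty}_s\cdot dW^i_s-\tfrac{1}{2}\sum_{i=1}^{N}\int_{0}^{t}|\triangle B^{i,N,\infty}_s|^2\,ds\Bigr),
\]
and observe that the factorial bound in $\mathbf{(C)}$ is exactly the Taylor expansion of an exponential moment: for any subinterval of length $\delta<N/\beta$ and any $\lambda<N/(\beta\delta)$,
\[
\EE_{\PP}\Bigl[\exp\bigl(\lambda{\textstyle\int_{T_0}^{T_0+\delta}}|\triangle B^{i,N,\infty}_s|^2\,ds\bigr)\Bigr]\le(1-\lambda\beta\delta/N)^{-1}.
\]
Slicing $[0,T]$ into finitely many subintervals on each of which this holds with $\lambda=1/2$, and patching the change of measure by successive conditioning in the spirit of Mishura--Veretennikov \cite{MisVer-16}, gives Novikov's criterion slice by slice and hence the true martingale property of $Z^N$ on $[0,T]$. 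Setting $d\widetilde{\PP}:=Z^N_T\,d\PP$ and $\widetilde{W}^i:=W^i-\int_0^{\cdot}\triangle B^{i,N,\infty}_s\,ds$, Girsanov produces independent $\widetilde{\PP}$-Brownian motions $(\widetilde{W}^i)_i$ under which $(X^{i,\infty})_i$ solves the interacting SDE \eqref{eq:Nparticles}; invoking the weak uniqueness \hypii\ identifies $\Ll_{\widetilde{\PP}}((X^{i,\infty}))$ with $\QQ^N$, so that $d\QQ^N/d\QQ^\infty=Z^N_T$ seen as a path functional.

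The rate follows from Pinsker combined with a tensor subadditivity of relative entropy (valid since $\QQ^N$ is exchangeable and $\QQ^\infty$ is a product):
\[
\Vert\QQ^{N,(k)}-\QQ^{\infty,(k)}\Vert_{TV,(0,T)}^{2}\le\tfrac{1}{2}H(\QQ^{N,(k)}\mid\QQ^{\infty,(k)})\le\tfrac{k}{2N}H(\QQ^N\mid\QQ^\infty),
\]
and the Girsanov formula gives $H(\QQ^N\mid\QQ^\infty)=\tfrac{1}{2}\EE_{\widetilde{\PP}}[\sum_i\int_0^T|\triangle B^{i,N,\infty}_s|^2\,ds]$. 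By exchangeability it is then enough to show that the single-particle contribution is $O(1/N)$ under $\widetilde{\PP}$, which yields $H(\QQ^N\mid\QQ^\infty)=O((1+\beta T)^{2})$ and, after Pinsker, the claimed bound $C(1+\beta T)\sqrt{k/N}$.

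The main obstacle is precisely this transfer: $\mathbf{(C)}$ gives the $O(1/N)$ moment control under $\PP$, whereas the entropy formula asks for the same estimate under $\widetilde{\PP}=Z^N_T\cdot\PP$. A naive Cauchy--Schwarz $\EE_{\widetilde{\PP}}[X]\le\EE_{\PP}[(Z^N_T)^2]^{1/2}\EE_{\PP}[X^2]^{1/2}$ would cost an a priori exponential-in-$N$ factor $\EE_{\PP}[(Z^N)^2]^{1/2}$. My plan is to replace it by the variational inequality $\EE_{\widetilde{\PP}}[X]\le\log\EE_{\PP}[e^{X}]+H(\widetilde{\PP}\mid\PP)$, which turns the problem into a bound on the exponential moments of $\sum_i\int_0^T|\triangle B^{i,N,\infty}|^2\,ds$ under $\PP$; this is exactly what the factorial structure of $\mathbf{(C)}$, combined with the slicing of $[0,T]$ into pieces of length $\sim 1/\beta$ and the tensorisation of the single-particle bounds already discussed, is designed to provide. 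Pinsker then delivers the announced rate.
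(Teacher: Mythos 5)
Your high-level plan coincides with the paper's \emph{motivating} discussion: carry out the change of measure by Girsanov, and note that Pinsker together with the super-additivity of relative entropy reduces everything to the full-chain entropy $H(\QQ^N\,|\,\QQ^\infty)$, producing the $\sqrt{k/N}$ scaling once that entropy is shown to be $O(1)$. But the paper exhibits this route only to set it aside, for exactly the obstacle you identify: the identity $H(\QQ^N\,|\,\QQ^\infty)=\tfrac12\,\EE_{\widetilde\PP}\bigl[\sum_i\int_0^T|\triangle B^{i,N,\infty}_s|^2\,ds\bigr]$ lives under the tilted measure $\widetilde\PP$, whereas $\mathbf{(C)}$ is a $\PP$-bound. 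Your proposed Donsker--Varadhan fix does not close this gap. With $\lambda=1$ the inequality $\EE_{\widetilde\PP}[X]\le\log\EE_\PP[e^{X}]+H(\widetilde\PP\,|\,\PP)$ is vacuous here: since $H(\widetilde\PP\,|\,\PP)=H(\QQ^N\,|\,\QQ^\infty)=\EE_{\widetilde\PP}[X]$ with your $X=\tfrac12\sum_i\int_0^T|\triangle B^{i,N,\infty}_s|^2\,ds$, the entropy cancels and you only learn $0\le\log\EE_\PP[e^X]$. Taking $\lambda>1$ gives $(\lambda-1)H\le\log\EE_\PP[e^{\lambda X}]$, but $\mathbf{(C)}$ plus Jensen in $i$ only yields $\EE_\PP[e^{\lambda X}]\le\sum_{p\ge0}(\lambda\beta T/2)^p$, finite only when $\lambda\beta T<2$, hence only when $\beta T<2$ since $\lambda>1$. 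The ``tensorisation of single-particle bounds'' you invoke is not available: the $\triangle B^{i,N,\infty}$ all involve the same empirical measure and are not independent, and after the unavoidable Jensen step $\bigl(\sum_i a_i\bigr)^p\le N^{p-1}\sum_i a_i^p$ the factor $N^p$ cancels exactly against the $N^{-p}$ from $\mathbf{(C)}$, leaving no $1/N$ margin for the exponential sum. Slicing $[0,T]$ into pieces of length $\sim1/\beta$ does not rescue this either, since chaining exponential moments across $M$ consecutive slices via H\"older or Cauchy--Schwarz inflates the exponent by a factor of order $M$, and the conditional form of $\mathbf{(C)}$ you would need is not assumed.

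The missing mechanism is the paper's chain of auxiliary, partially interacting particle systems. For a partition $\{m\delta\}_{0\le m\le M}$ of $[0,T]$, one defines $(Y^{i,N,m,\infty})_i$ to evolve as $N$ independent McKean copies on $[0,m\delta]$ and as the genuine interacting system on $(m\delta,T]$; successive members of this chain then differ, at the level of Radon--Nikodym derivatives, only through the \emph{local} Girsanov increment $Z^N_{(m+1)\delta}/Z^N_{m\delta}$. This is exactly what Proposition~\ref{prop:ControlExpMart} controls, using $\mathbf{(C)}$ and the Carlen--Kr\'ee martingale inequality to bound any finite moment of $Z^N_{(m+1)\delta}/Z^N_{m\delta}$ uniformly in $N$ whenever $\delta<(8\kappa\beta)^{-1}$. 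The total variation cost of each transition is then estimated \emph{directly}---via the representation \eqref{eq:BoundTVa}, Burkh\"older--Davis--Gundy, H\"older and Doob, not via Pinsker---and these costs are summed by the triangle inequality in \eqref{proofstp:d}. This converts a multiplicative Girsanov cost into an additive total-variation budget, which is precisely what makes the final bound linear in $\beta T$ rather than restricted to $\beta T<2$. To make your proposal rigorous you would need to replace the Donsker--Varadhan step with this interpolation-plus-triangle-inequality device (or some substitute playing the same role), rather than relying on a global exponential moment.
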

The condition $\mathbf{(C)}$ can be understood as a local Novikov condition in the spirit the one key argument for the proof of Khasm'inskii's lemma (see e.g. [Simon \cite{Simon-82}, Lemma B.1.2.]). Alternatively the condition $\mathbf{(C)}$ in Theorem \ref{mainthm1:LinearCaseTV} can be viewed as a (non-asymptotic) large deviation principle or a sub-gaussian concentration property for the deviation between the "empirical" drift of \eqref{eq:Nparticles} evaluated along the $N$-system of McKean SDEs \eqref{eq:McKeanVlasovParticle}:
\[
 B\big(t,((X^{i,\infty}_r)_{0\leq r\leq t});\frac{1}{N}\sum_{j=1}^N\delta_{\{(X^{j,\infty_r})_{0\leq r\leq t}\}}\big),
 \]
 and its mean-field limit:
 \[
 B\big(t,((X^{i,\infty}_r)_{0\leq r\leq t});\Ll((X^{i,\infty_r})_{0\leq r\leq t})\big).
 \]
 In the situations \eqref{eq:ProtoMcKeanVlasovintro} and \eqref{eq:LangevinMcKeanintro}, $\mathbf{(C)}$ is a direct consequence of the boundedness of the interaction kernel $b$. In more general situation the condition may result from a Lipschitz property of $\nu\in\Pp(\Cc([0,T];\er^d))\mapsto B\big(t,x;\nu\big)$ and a centering property (see Lemma \ref{lem:RatePathDependent}) or from a higher regularity property in terms of the variational- linear functional derivative of  $\nu\in\Pp(\Cc([0,T];\er^d))\mapsto B\big(t,x;\nu\big)$ (see Definition \ref{def:FlatDerivative} and Proposition \ref{prop:DifferentiabilityCondition}).

\section{Proof of Theorem \ref{mainthm1:LinearCaseTV}}\label{sec:Proof}
\subsection{Preliminary on propagation of chaos for the total variation distance and control of the Girsanov transformation between $\Ll(X^{1,\infty},\dots,X^{N,\infty})$ and $\Ll(X^{1,N},\dots,X^{N,N})$}
For notation convenience, define
\[
P^{k,N}=\Ll((X^{1,N}_t,X^{2,N}_t,\cdots,X^{k,N}_t)_{0\leq t\leq T})\in \Pp(\Cc([0,T];\er^{dk})),
\]
 the joint law of the first $k$ particles of \eqref{eq:McKeanVlasovParticle} and by
\[
P^{k,\infty}=\Ll((X^{1,\infty}_t,X^{2,\infty}_t,\cdots,X^{k,\infty}_t)_{0\leq t\leq T})\in \Pp(\Cc([0,T];\er^{dk})),
\]
the joint law of the first $k$ independents copies of \eqref{eq:McKeanVlasov_intro}. The later reduces to
\[
P^{k,\infty}=\underbrace{P^{\infty}\otimes P^{\infty}\otimes  \cdots \otimes P^{\infty}}_{\text{k times}},\,\,\, P^{\infty}=\Ll\big((X^{\infty}_t)_{0\leq t\leq T}\big),
\]
as the assumption \hypi ensures the weak uniqueness of  \eqref{eq:McKeanVlasov_intro}.

The combination of the assumptions \hypo, \hypi and \hypii ensure that for all $1\leq k\leq N<\infty$, the measures $P^{k,N}$ and $P^{k,\infty}$ are equivalent and the Radon-Nikodym derivative formulates\footnote{The proof of \eqref{proofstp:i} under the sole assumptions \hypo, \hypi and \hypii is detailed in the appendix section.} is given by the Doleans-Dale exponential martingale:
 \begin{equation}
\label{proofstp:i}
\begin{aligned}
&Z^{N}_T:=\frac{dP^{N,N}}{dP^{N,\infty}}\\
&=\exp\left\{-\sum_{i=1}^N\int_0^T \left(B\big(t,(X^{i,\infty}_r)_{0\leq r\leq t},\frac{1}{N}\sum_{j=1}^N\delta_{\{(X^{j,\infty}_r)_{0\leq r\leq t}\} }\big)-\int B\big(t,(X^{i,\infty}_r)_{0\leq r\leq t},\Ll((X^{i,\infty}_r)_{0\leq r\leq t})\big)\right)\cdot \,dW^{i}_t\right.\\
&\quad \left.-\frac{1}{2}\int_0^T  \sum_{i=1}^N
\left|B\big(t,(X^{i,\infty}_r)_{0\leq r\leq t},\frac{1}{N}\sum_{j=1}^N\delta_{\{(X^{j,\infty}_r)_{0\leq r\leq t}\}}\big)-\int B\big(t,(X^{i,\infty}_r)_{0\leq r\leq t},\Ll((X^{i,\infty}_r)_{0\leq r\leq t})\big)
\right|^2\,dt\right\}\\
&=\exp\left\{-\sum_{i=1}^N\int_0^T \triangle B^{i,N}_t\cdot \,dW^{i}_t-\frac{1}{2}\sum_{i=1}^N\int_0^T \left|\triangle B^{i,N}_t\right|^2\,dt\right\},
\end{aligned}
\end{equation}
where $(\triangle B^{i,N}_t)_{0\leq t\leq T},\,i=1\dots,N$ are given as in $(\mathbf{C})$. By Csisz\'ar-Pinsker-Kullback's inequality,
\begin{equation}\label{proofstp:g}
\Vert P^{k,N}-P^{k,\infty}\Vert_{TV,\Pp((\Cc([0,T];\er^{kd})))} \leq \sqrt{2 H(P^{k,N}\,|\,P^{k,\infty})},
\end{equation}
where $H(P^{k,N}\,|\,P^{k,\infty})$ is the relative entropy between $P^{k,N}$ and $P^{k,\infty}$ is given by
 \[
 H(P^{k,N}\,|\,P^{k,\infty})=\int_{\mathbf{\omega}^k\in\Cc([0,T];\er^{dk})} \log(dP^{k,\infty}/dP^{k,\infty})(\mathbf{\omega}^{k})P^{k,N}(d\mathbf{\omega}^k)
 \]
 with $dP^{k,N}/dP^{k,\infty}$ being explicitly given by the conditional expectation $\EE_{\PP}\left[Z^N_T \,|\,(X^{1,\infty},\dots,X^{k,\infty})\right]$ valuing the average value of $Z^{N}_T$ given the path on $[0,T]$ of the $k$-first components of \eqref{eq:McKeanVlasovParticle}, $(X^{1,\infty}_t,\dots,X^{k,\infty}_t)_{0\leq t\leq T}$. At this stage, for $1\leq k<N$, decomposing the empirical measure $\frac{1}{N}\sum_{j=1}^N\delta_{\{(X^{j,\infty}_r)_{0\leq r\leq t}\}}$ into
  \[
  \frac{1}{N}\sum_{j=1}^k\delta_{\{(X^{j,\infty}_r)_{0\leq r\leq t}\}}+\frac{N-(k+1)}{N}\left(\frac{1}{N-(k+1)}\sum_{j=k+1}^N\delta_{\{(X^{j,\infty}_r)_{0\leq r\leq t}\}}\right),
  \]
  and owing to the l.s.c. property of $H$ and as $(X^{k+1,\infty},\dots,X^{N,\infty})$ are i.i.d.,
  a natural propagation of chaos property can be derived providing some boundedness and continuity properties on $\nu\mapsto B(t,x;\nu)$. (In \cite{Lacker-18}, an alternative route was proposed  proving that $\lim_{N\rightarrow\infty}H(P^{k,\infty}\,|\,P^{k,N})=0$. This results was derived succeeding from a preliminary propagation of chaos results $\frac{1}{N}\sum_{j=1}^N\delta_{\{(X^{j,N}_r)_{0\leq r\leq T}\}}\rightarrow \Ll((X^\infty_r)_{0\leq r\leq T})$ derived from a large deviation principle.) An explicit estimate of the propagation of chaos can further be deduced from the super-additive property of the renormalized relative entropy (see e.g. [Hauray and Mischler 2014, Lemma 3.3-iv]),
\[
\frac{1}{k}H\big(P^{k,N}|\,P^{k,\infty}\big)\leq \frac{1}{N}H\big(P^{N,N}\,|\,P^{N,\infty}\big).
\]
Plugged into \eqref{proofstp:g},
\begin{equation*}
\Vert P^{k,N}-P^{k,\infty}\Vert_{TV,\Pp((\Cc([0,T];\er^{kd})))} \leq \sqrt{\frac{2k}{N} H(P^{N,N}\,|\,P^{N,\infty})}=\sqrt{\frac{2k}{N} \EE_{\PP}\left[Z^{N}_T\log(Z^{N}_T)\right]}.
\end{equation*}
from which emerges the optimal rate $1/\sqrt{N}$ provided $\sup_N\EE[(Z^N_T)^{1+\delta}]<\infty$, for some $\delta>0$.
The necessity of the uniform control for a moment greater than $1$ of $(Z^N_t)_{0\leq t\leq T}$ can be observed more directly in the case of $P^{k,N}$ and $P^{k,\infty}$:
Under \hypi and \hypii, the total variation distance between $P^{k,N}$ and $P^{k,\infty}$ can be expressed as:\eqref{proofstp:i}, for all $A\in\Bb(\Cc([0,T];\er^{kd}))$, we have
\begin{align*}
\widetilde{\PP}((X^{1,N},\dots,X^{k,N})\in A)=P^{k,N}(A)=\EE_\PP\left[Z^{N}_T\1_{\{(X^{1,\infty},\dots,X^{k,\infty})\in A\}}\right]
\end{align*}
from which we deduce that
\begin{align*}
\Vert P^{k,N}-P^{k,\infty} \Vert_{TV,(0,T)}
&=\sup_{A \in\Bb(\Cc([0,T];\er^{2d}))}
\left|\EE_\PP\left[\left(Z^{N}_T-1\right)\1_{\{(X^{1,\infty},\dots,X^{k,\infty})\in A\}}\right]\right|\\
&=\EE_\PP\left[\left|\EE_\PP\left[\left(Z^{N}_T-1\right)\1_{\{(X^{1,\infty},\dots,X^{k,\infty})\in A\}}\,|\,(X^{1,\infty},\dots,X^{k,\infty})\right]\right|\right].
\end{align*}
Since
\[
Z^N_T=1+\sum_{i=1}^{N}\int_0^T Z^N_t\triangle B^{i,N}_t\,\cdot dW^{i}_t=\sum_{i=1}^{N} \sum_{l=1}^m \int_0^T Z^N_t\triangle B^{i,N,(l)}_t\,\cdot dW^{i,(l)}_t,
\]
for
\[
Z^N_t=\frac{dP^{N,N}}{dP^{N,\infty}}\Big{|}_{\Bb(\Cc([0,T];\er^d))}=\exp\left\{-\sum_{i=1}^N\int_0^t \triangle B^{i,N}_r\cdot \,dW^{i}_r-\frac{1}{2}\sum_{i=1}^N\int_0^t \left|\triangle B^{i,N}_r\right|^2\,dr\right\},
\]
and since $(W^{k+1},\dots,W^{N})$ are independent from $(X^{1,\infty},\dots,X^{N,\infty})$, the conditional expectation
\[
\EE_\PP\left[\left(Z^{N}_T-1\right)\1_{\{(X^{1,\infty},\dots,X^{k,\infty})\in A\}}\,|\,(X^{1,\infty},\dots,X^{k,\infty})\right],
\]
reduces into
\[
\EE_\PP\left[\left(\sum_{i=1}^k\int_0^T Z^N_t\triangle B^{i,N}_t\,\cdot dW^{i}_t\right)\,|\,(X^{1,\infty},\dots,X^{k,\infty})\right].
\]
This gives:
\begin{equation}\label{eq:BoundTVa}
\Vert P^{k,N}-P^{k,\infty} \Vert_{TV,(0,T)}= \EE_{\PP}\left[\left|\EE_{\PP}\left[\sum_{i=1}^k\int_0^T Z^{N}_t \triangle B^{i,N}_t\cdot \,dW^{i}_t\,\Big{|}\,(X^{1,\infty}_r,\dots,X^{k,\infty}_r)_{0\leq r\leq T}\right]\right|\right].
\end{equation}
Using successively Burkh\"older-Davis-Gundy's inequality, Jensen's inequality, the exchangeability of $(X^{1,\infty},\dots,X^{N,\infty})$ and H\"older's inequality for an arbitrary $1<p< \infty$, it follows:
\begin{align*}
\Vert P^{k,N}-P^{k,\infty} \Vert_{TV,(0,T)}&\leq \EE_{\PP}\left[\left(\int_0^T (Z^{N}_t)^2 \sum_{i=1}^k\left|\triangle B^{i,N}_t\right|^2\,dt\right)^{1/2}\right]\leq \sqrt{k}\EE_{\PP}\left[\left(\int_0^T (Z^{N}_t)^2\left|\triangle B^{i,N}_t\right|^2\,dt\right)^{1/2}\right]\\
&\leq \sqrt{k}\EE_{\PP}\left[\max_{0\leq t\leq T}(Z^{N}_t)\left(\int_0^T\left|\triangle B^{i,N}_t\right|^2\,dt\right)^{1/2}\right]\\
&\leq \sqrt{k}\left(\EE_{\PP}\left[\max_{0\leq t\leq T}(Z^{N}_t)^p\right]\right)^{1/p}\left(\EE_{\PP}\left[\left(\int_0^T\left|\triangle B^{i,N}_t\right|^2\,dt\right)^{p/(2(p-1))}\right]\right)^{(p-1)/p}.
\end{align*}
Applying Doob's inequality, we get
\begin{equation}\label{eq:BoundTVb}
\Vert P^{k,N}-P^{k,\infty} \Vert_{TV,(0,T)}\leq \sqrt{k}\frac{p}{p-1}\left(\EE_{\PP}\left[(Z^{N}_T)^p\right]\right)^{1/p}\left(\EE_{\PP}\left[\left(\int_0^T\left|\triangle B^{i,N}_t\right|^2\,dt\right)^{p/(2(p-1))}\right]\right)^{(p-1)/p}.
\end{equation}
The display of the rate $1/\sqrt{N}$ is then directly related to the technical difficulty of controlling uniformly a $1+\delta$-moment of $Z^N_T$ as such uniform control would imply that the finiteness of the moments , $\EE_{\PP}[(\sum_{i=1}^N\int_0^T|\triangle B^{i,N}_t|^2\,dt)^{k}]$, which, owing to the exchangeability of $(X^{1,\infty},\dots,X^{N,\infty})$ amounts to establishing $\EE_{\PP}[(\int_0^T|\triangle B^{i,N}_t|^2\,dt)^{k}]$ is of order $1/N^k$.

The proof of Theorem of \ref{mainthm1:LinearCaseTV} below is set by first establishing a local-in-time control of an arbitrary moment of $(Z^N_t)_{0\leq t\leq T}$, which combined with \eqref{eq:BoundTVb} and a careful split of the transformation from $(X^{1,\infty},\dots,X^{N,\infty})$ to $(X^{1,N},\dots,X^{N,N})$ to small time intervals enable to conclude the claim.

\subsection{Proof of Theorem \ref{mainthm1:LinearCaseTV}}

%
%\begin{lemma}\label{lem:LocalMomentControl} Let $\{(X^{i,\infty}_t)_{t\geq 0};\,1\leq i\leq N\}$ be given as in \eqref{eq:McKeanVlasovParticle} and assume that $\mathbf{(C)}$ hold true. Then, for all $0\leq T_0<\infty$, $0\leq \delta<\infty$, $p\geq 1$,
%\begin{equation}\label{proofstp:b}
%\EE_\PP\left[\left|\sum_{i=1}^N\int_{T_0}^{T_0+\delta} \triangle B^{i,N}_t\cdot \,dW^{i}_t\right|^{2p}\right]\leq (p)^{p} p! (8 \delta\beta)^p.
%\end{equation}
%\end{lemma}

\begin{prop}\label{prop:ControlExpMart} Let $\{(X^{i,\infty}_t)_{0\leq t\leq T};\,1\leq i\leq N\}$ be given as in \eqref{eq:McKeanVlasovParticle} and assume that $\mathbf{(C)}$ hold true. Then, for all $0<T_0<T<\infty$, $0<\kappa<\infty$,
  \[
  \sup_N \EE_\PP\left[(Z^N_{T_0+\delta}/Z^N_{T_0})^\kappa\right]=\sup_N\EE_\PP\left[\exp\left\{\kappa\sum_{i=1}^N \int_{T_0}^{T_0+\delta} \triangle B^{i,N}_t\cdot \,dW^{i}_t-\frac{\kappa}{2}\int_{T_0}^{T_0+\delta} \left|\triangle B^{i,N}_t\right|^2 \,dt\right\}\right],
  \]
  is bounded from above by $1+\exp{\kappa^2}+\frac{2}{1-8 \kappa\delta \beta}$ provided that $\delta< (8\kappa \beta)^{-1}$.
\end{prop}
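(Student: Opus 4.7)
My plan is to recognise $(Z^N_{T_0+\delta}/Z^N_{T_0})^\kappa$ as the product of a nonnegative exponential supermartingale and an explicit functional of the associated bracket, to extract from condition $\mathbf{(C)}$ a Khasm'inskii-type exponential-moment control on this bracket, and then to combine the two ingredients via a careful partition of $\Omega$ according to the size of the bracket, producing the three summands $1$, $\exp(\kappa^{2})$ and $2/(1-8\kappa\delta\beta)$ of the announced bound.

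Concretely, I would set $M_{t}:=-\sum_{i=1}^{N}\int_{T_{0}}^{t}\triangle B^{i,N}_{s}\cdot dW^{i}_{s}$ and $A:=\langle M\rangle_{T_{0}+\delta}=\sum_{i=1}^{N}\int_{T_{0}}^{T_{0}+\delta}|\triangle B^{i,N}_{s}|^{2}\,ds$. A direct computation then yields
\[ (Z^{N}_{T_{0}+\delta}/Z^{N}_{T_{0}})^{\kappa}=\mathcal{E}(\kappa M)_{T_{0}+\delta}\cdot\exp\!\bigl(\tfrac{\kappa(\kappa-1)}{2}A\bigr), \]
where $\mathcal{E}(\kappa M)=\exp(\kappa M-\tfrac{\kappa^{2}}{2}A)$ is a nonnegative continuous local martingale and therefore a supermartingale with $\EE[\mathcal{E}(\kappa M)_{t}]\leq 1$. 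Using the exchangeability of the i.i.d.\ copies $(X^{i,\infty})_{i}$ together with the convexity inequality $(\sum_{i=1}^{N}X_{i})^{p}\leq N^{p-1}\sum_{i}X_{i}^{p}$, condition $\mathbf{(C)}$ upgrades into $\EE[A^{p}]\leq p!\,(\beta\delta)^{p}$, and summing the power series gives the Khasm'inskii-type bound $\EE[\exp(\alpha A)]\leq 1/(1-\alpha\beta\delta)$ for any $\alpha\in[0,(\beta\delta)^{-1})$.

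I would then split $\EE[(Z^{N})^{\kappa}]$ into three pieces mirroring the three summands of the announced bound. On $\{A=0\}$, the integrand equals $1$, contributing the constant $1$. On the intermediate set $\{0<A\leq c\}$ for a constant $c$ of order one, the deterministic bound $\exp(\kappa(\kappa-1)A/2)\leq\exp(\kappa^{2}c/2)$ combined with $\EE[\mathcal{E}(\kappa M)]\leq 1$ produces a contribution of order $\exp(\kappa^{2})$. On the large-$A$ set $\{A>c\}$, I would apply H\"older's inequality to a refined rewriting $(Z^{N})^{\kappa}=\mathcal{E}(\lambda M)^{1/\alpha}\exp(\gamma(\lambda,\kappa)A)$ for a suitably chosen H\"older-conjugate pair $(\alpha,\lambda)$, separating the supermartingale factor (controlled by $1$) from an exponential moment of $A$ whose coefficient one calibrates to scale linearly in $\kappa$; the moment bound of the previous paragraph then yields the third summand $2/(1-8\kappa\delta\beta)$ under the smallness condition $\delta<(8\kappa\beta)^{-1}$.

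The main technical obstacle lies in the large-$A$ step. A naive Cauchy--Schwarz applied to the identity $(Z^{N})^{\kappa}=\mathcal{E}(2\kappa M)^{1/2}\exp\bigl(\tfrac{2\kappa^{2}-\kappa}{2}A\bigr)$ produces a residual exponent on $A$ of order $\kappa^{2}$, leading only to the weaker restriction $\delta<(\kappa^{2}\beta)^{-1}$; obtaining the sharper linear-in-$\kappa$ threshold $\delta<(8\kappa\beta)^{-1}$ stated in the proposition requires a $\kappa$-dependent tuning of H\"older exponents, or equivalently a careful exploitation of the cancellation between the Gaussian-like growth of $\exp(\kappa M)$ and the damping factor $\exp(-\kappa A/2)$ present precisely on the set $\{A>c\}$. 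The auxiliary fact that all the exponentials $\mathcal{E}(\lambda M)$ invoked along the way are bona fide $L^{1}$-supermartingales with expectation at most $1$ is automatic from their nonnegativity and the local martingale property, and does not require invoking Novikov's criterion.
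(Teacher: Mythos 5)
Your factorization $(Z^N_{T_0+\delta}/Z^N_{T_0})^\kappa=\mathcal{E}(\kappa M)_{T_0+\delta}\exp\big(\tfrac{\kappa(\kappa-1)}{2}A\big)$ with $M_t=-\sum_{i=1}^N\int_{T_0}^t\triangle B^{i,N}_s\cdot dW^i_s$ and $A=\langle M\rangle_{T_0+\delta}$ is correct, as is the route from $(\mathbf{C})$, Jensen's inequality and exchangeability to the moment bound $\EE_\PP[A^p]\leq p!(\beta\delta)^p$ and thence to the Khasm'inskii-type exponential estimate $\EE_\PP[\exp(\alpha A)]\leq(1-\alpha\beta\delta)^{-1}$ for $\alpha\beta\delta<1$. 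Both ingredients are sound.

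The gap, which you flag yourself, is the large-$A$ step, and your suspicion that no $\kappa$-dependent H\"older tuning can reach a threshold linear in $\kappa$ is well founded: it cannot, and the restriction $\delta<(8\kappa\beta)^{-1}$ printed in the proposition is a slip of this preliminary draft. Tracing the paper's own constants --- it drops the nonnegative subtracted quadratic variation to reduce the claim to $\EE_\PP[\exp(\kappa M_{T_0+\delta})]<\infty$, Taylor-expands this exponential, absorbs the odd moments via $r^{2p+1}\leq 1+r^{2p+2}$, applies the Carlen--Kr\'ee bound $\EE_\PP[M^{2p}]\leq 2^{2p}(2p)^p\EE_\PP[A^p]$ and then $(\mathbf{C})$ --- produces the geometric ratio $8\kappa^2\beta\delta$ (the displayed $\kappa^p$ should be $\kappa^{2p}$), so the correct restriction is $\delta<(8\kappa^2\beta)^{-1}$. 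Heuristically the quadratic dependence is forced: conditionally on $A$, $M$ behaves like $N(0,A)$, so $\EE_\PP[(Z^N)^\kappa]$ behaves like $\EE_\PP[\exp(\tfrac{\kappa(\kappa-1)}{2}A)]$, which under the exponential-tail scaling allowed by $(\mathbf{C})$ requires $\kappa(\kappa-1)\beta\delta\lesssim 1$.

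Once the threshold is corrected, the ``naive'' Cauchy--Schwarz you dismissed already gives a complete, and in fact shorter, proof. From $(Z^N_{T_0+\delta}/Z^N_{T_0})^\kappa=\mathcal{E}(2\kappa M)_{T_0+\delta}^{1/2}\exp\big(\tfrac{2\kappa^2-\kappa}{2}A\big)$, Cauchy--Schwarz, the supermartingale bound $\EE_\PP[\mathcal{E}(2\kappa M)_{T_0+\delta}]\leq 1$ and your exponential-moment estimate give
\[
\EE_\PP\big[(Z^N_{T_0+\delta}/Z^N_{T_0})^\kappa\big]\leq\EE_\PP\big[\exp\big((2\kappa^2-\kappa)A\big)\big]^{1/2}\leq\big(1-(2\kappa^2-\kappa)\beta\delta\big)^{-1/2},
\]
uniformly in $N$, whenever $\delta<((2\kappa^2-\kappa)\beta)^{-1}$; the three-way split of $\Omega$ into $\{A=0\}$, $\{0<A\leq c\}$, $\{A>c\}$ and the tuned H\"older step are unnecessary. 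Your route and the paper's are genuinely different proofs of the same (corrected) statement: you package the bracket control as an exponential moment $\EE_\PP[\exp(\alpha A)]$ and pay with one application of Cauchy--Schwarz, while the paper stays at the level of polynomial moments $\EE_\PP[M^{2p}]$ and pays with the Carlen--Kr\'ee inequality and an explicit Taylor expansion. Either is adequate for the proof of Theorem 2.1, since there $\kappa=p$ is a fixed integer $>1$, and replacing the linear by the quadratic threshold merely shrinks the partition mesh $\delta$ and changes the final constant, not the $O(\sqrt{k/N})$ rate.
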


\begin{proof}[Proof of Proposition \ref{prop:ControlExpMart}] For the moment, let $\delta$ be an arbitrary positive real number and let us show that
 \[
  \sup_N\EE_\PP\left[\exp\left\{\kappa\sum_{i=1}^N \int_{T_0}^{T_0+\delta} \triangle B^{i,N}_t\cdot \,dW^{i}_t\right\}\right]<\infty.
  \]
 Using the Taylor expansion for the exponential function,
\begin{align*}
\EE_\PP\left[ \exp\left\{\kappa\sum_{i=1}^N\int_{T_0}^{T_0+\delta}\triangle B^{i,N}_t\cdot \,dW^{i}_t\right\}\right]
\leq \sum_{k\geq 0}\frac{\kappa^k}{k!}\EE_\PP\left[\left(\sum_{i=1}^N\int_{T_0}^{T_0+\delta} \triangle B^{i,N}_t\cdot \,dW^{i}_t\right)^k\right].
\end{align*}
Splitting this sum into its even and odd components, and since, for all $r\in\er$, $r^{2p+1}\leq 1+r^{2p+2}$, we have
\begin{equation}
\label{proofstp:a}
\begin{aligned}
&\EE_\PP\left[ \exp\left\{\kappa\sum_{i=1}^N\int_{T_0}^{T_0+\delta}\triangle B^{i,N}_t\cdot \,dW^{i}_t\right\}\right]\\
&\leq \sum_{p\geq 0}\frac{\kappa^{2p+1}}{(2p+1)!}\EE_\PP\left[\left(\sum_{i=1}^N\int_{T_0}^{T_0+\delta}  \triangle B^{i,N}_t\cdot \,dW^{i}_t\right)^{2p+1}\right] +\sum_{p\geq 0}\frac{\kappa^{2p}}{(2p)!}\EE_\PP\left[\left(\sum_{i=1}^N\int_{T_0}^{T_0+\delta}  \triangle B^{i,N}_t\cdot \,dW^{i}_t\right)^{2p}\right]\\
&\leq 1+\sum_{p\geq 0}\frac{\kappa^{2p+1}}{(2p+1)!}+2\sum_{p\geq 0}\frac{|\kappa|^{2p}}{(2p)!}\EE_\PP\left[\left(\sum_{i=1}^N\int_{T_0}^{T_0+\delta} \triangle B^{i,N}_t\cdot \,dW^{i}_t\right)^{2p}\right].
\end{aligned}
\end{equation}
Applying the martingale moment control of Carlen-Kr\'ee \cite{CarKre-91} (see Theorem \ref{thm:CarlenKree}, Appendix section, for a reminder), we have
\begin{align*}
 \EE_\PP\left[\left(\sum_{i=1}^N\int_{T_0}^{T_0+\delta} \triangle B^{i,N}_t\cdot \,dW^{i}_t\right)^{2p}\right]
 %&=\EE_\PP\left[\left(\sum_{l=1}^d\sum_{i=1}^N\int_{T_0}^{T_0+\delta}  \left(\triangle B^{i,N}_t\cdot \,dW^{i}_t\right)^{l}\right)^{2p}\right]\\
% %
% &\leq d^{2p-1}\EE_\PP\left[\left(\sum_{i=1}^N\int_{T_0}^{T_0+\delta}  \left(\triangle B^{i,N}_t\cdot \,dW^{i}_t\right)^{l}\right)^{2p}\right]\\
% %
 \leq 2^{2p} (2p)^{p} \EE_\PP\left[\left(\sum_{i=1}^N\int_{T_0}^{T_0+\delta}  \left|\triangle B^{i,N}_t\right|^2\,dt\right)^{p}\right].
 \end{align*}
Then, by Jensen's inequality and the exchangeability of the $N$-system of McKean-Vlasov dynamics, we get that
\begin{align*}
\EE_\PP\left[\left(\sum_{i=1}^N\int_{T_0}^{T_0+\delta}  \left|\triangle B^{i,N}_t\right|^2\,dt\right)^{p}\right]
& \leq N^{p}\EE_\PP\left[\left(
\int_{T_0}^{T_0+\delta}  \left|\triangle B^{i,N}_t\right|^2\,dt\right)^{p}\right].
%\\
%
%&\leq N^{p-1}\delta^{2p-1}\sum_{i=1}^N\int_{T_0}^{T_0+\delta}
%\EE_\PP\left[\left|\triangle B^{i,N}_t\right|^{2p}
%\right]\,dt\\
%
%&\quad\quad =N^{p}\delta^{2p-1}\int_{T_0}^{T_0+\delta}
%\EE_\PP\left[\left|\triangle B^{i,N}_t \right|^{2p}
%\right]\,dt
\end{align*}
Plugin the estimate of the condition $\mathbf{(C)}$ then ensures the upper bound
\begin{equation}
\label{proofstp:c}
\begin{aligned}
\EE_\PP\left[ \exp\left\{\kappa\sum_{i=1}^N\int_{T_0}^{T_0+\delta}\triangle B^{i,N}_t\cdot \,dW^{i}_t\right\}\right]\leq 1+\exp{\kappa^2}+2\sum_{p\geq 0}\frac{p!p^p2^{3p}\delta^p\beta^{p}\kappa^p}{(2p)!}.
\end{aligned}
\end{equation}
Since $C:=\sup_{p}\big(p!p^p/((2p)!) \big)<\infty$, the sum is essentially geometric and the condition $\delta/(8\beta\kappa)<1$ ensures its finiteness with
\[
\sup_N \EE_\PP\left[(Z^N_{T_0+\delta}/Z^N_{T_0})^\kappa\right]\leq 1+\exp{\kappa^2}+\frac{2}{1-8 \kappa\delta \beta} .
\]
\end{proof}

Coming back to the proof of Theorem \ref{mainthm1:LinearCaseTV}], for an arbitrary integer $1<p<\infty$, and for $\overline{\delta}:=(8\beta p)^{-1}$, choose an arbitrary real number $\delta$ in $(0,\overline{\delta}(p))$ (this number will be specified at the end of the proof). For $M:=\llcorner T/\delta\lrcorner$, we define the partition $[0,T]=\cup_{m=0}^M[t_m,t_{m+1})$ with
\[
t_0=0,\,t_{M+1}=T,\,t_{m+1}-t_{m}=\delta\,\mbox{for}\,0\leq m< M.
\]
Next, for each $m$, define the family of $N$-processes $(Y^{1,N,m,\infty}_t)_{0\leq t\leq T},\dots,(Y^{N,N,m,\infty}_t)_{0\leq t\leq T}$ as: for each $1\leq i\leq N$,

\noindent
$\bullet$ Whenever $0\leq t\leq m\delta$, the path $Y^{i,N,m,\infty}_t$ is given as a weak solution to
\begin{align*}
Y^{i,N,m,\infty}_t&=Y^{i,N,m,\infty}_0+\int_{0}^{t}c(s,(Y^{i,N,m,\infty}_r)_{r\leq s})\,ds\\
&\quad+\int_0^t A(s,(Y^{i,N,m,\infty}_r)_{0\leq r\leq s})\big(B(s,(Y^{i,N,m,\infty}_r)_{0\leq r\leq s};\Ll((Y^{i,N,m,\infty}_r)_{0\leq r\leq s}))\,ds+\,dW^i_s\big);
\end{align*}
$\bullet$ Whenever $m\delta <t\leq T$,
\begin{align*}
Y^{i,N,m,\infty}_t&=Y^{i,N,m,\infty}_{m\delta}+\int_{m\delta}^{t}c(s,(Y^{i,N,m,\infty}_r)_{r\leq s})\,ds\\
&\quad +\int_{m\delta}^t A(s,(Y^{i,N,m,\infty}_r)_{0\leq r\leq s})\big(B(s,(Y^{i,N,m,\infty}_r)_{0\leq r\leq s};\overline{\nu}^{N,N}_s)\,ds+\,dW^i_s\big),
\end{align*}
for
\[
\overline{\nu}^{N,N}_t=\Ll((Y^{i,N,m,\infty}_r)_{0\leq r\leq m\delta})+\frac{1}{N}\sum_{j=1}^N\delta_{\{(Y^{j,N,m,\infty}_r)_{m\delta<  r\leq t} \}}.
\]
By construction, the sequence $\{(Y^{i,N,1,\infty}_t)_{0\leq t\leq T};\,i=1,\dots,N\}$, ..., $\{(Y^{i,N,1,\infty}_t)_{0\leq t\leq T};\,i=1,\dots,N\}$ corresponds to a partially interacting particle corresponding, for any fixed $m$, to  the McKean SDEs system \eqref{eq:McKeanVlasov_intro} up to the time $m\delta$, and integrate a mean-field interaction from $t=m\delta$ to $t=T$. Owing the uniqueness properties following \hypii and \hypiii, for $m=0$, $(Y^{1,N,0,\infty}_t,\dots,Y^{N,N,0,\infty}_t)_{0\leq t\leq T}$ corresponds to the McKean-Vlasov system \eqref{eq:ParticlePastDepend} and, for $m=M+1$, $(Y^{1,N,M+1,\infty}_t,\dots,Y^{N,N,M+1,\infty}_t)_{0\leq t\leq T}$ to the interacting particle system \eqref{eq:McKeanVlasovParticle}.
Denoting by $P^{k,m,N}$ the probability measure generated by $(Y^{1,N,m,\infty}_t)_{0\leq t\leq T},\dots,(Y^{k,N,m,\infty}_t)_{0\leq t\leq T}$ on $(\Cc([0,T];\er^d),\Bb(\Cc([0,T];\er^d)))$, by the triangular inequality,
\begin{equation}
\label{proofstp:d}
\Vert P^{k,\infty}-P^{k,N}\Vert_{TV,(0,T)}=\Vert P^{1,M+1,N}-P^{1,0,N}\Vert_{TV,(0,T)}\leq
\sum_{m=0}^{M}\Vert P^{k,m+1,N}-P^{k,m,N}\Vert_{TV,(0,T)}.
\end{equation}
%
%\begin{aligned}
%&X^i_0+\int_0^t B[s,Y^{i,N,m,\infty}_s;\mu^\infty(s)]\,ds+\int_0^t\sigma(s,Y^{i,N,m}_s)\,dW^i_s,\\
%%
%&Y^{i,N,m,\infty}_{m\delta}+\int_{m\delta}^t B[s,Y^{i,N,m}_s;\overline{\mu}^{N,N}_s]\,ds+\int_{\delta m}^t\sigma(s,Y^{i,N,m}_s) \,dW^i_s,\,m\delta< t\leq T.
%\end{aligned}

%\begin{equation}
%\label{eq:Mixedparticles}
%Y^{i,N,m,\infty}_t= \left\{
%\begin{aligned}
%&X^i_0+\int_0^t B[s,X^{i,\infty}_s;\mu^\infty(s)]\,ds+\int_0^t\sigma(s,X^{i,\infty}_s)\,dW^i_s,\,0\leq t\leq m\delta,\\
%%
%&X^{i,\infty}_{m\delta}+\int_{m\delta}^t B[s,X^{i,N}_s;\overline{\mu}^{N,N}_s]\,ds+\int_{\delta m}^t\sigma(s,X^{i,N}_s) \,dW^i_s,\,m\delta< t\leq T.
%\end{aligned}
%\right.
%\end{equation}

%\begin{equation}\label{proofstp:d}
%\begin{aligned}
%\frac{dP^{i,m+1,N}}{dP^{i,m,N}}\Big{|}_{\Bb_T}=\frac{dP^{i,m+1}}{dP^{i,m}}\Big{|}_{\Bb_{(m+1)\delta}}
%&=\exp\left\{-\sum_{i=1}^N\int_{m\delta}^{(m+1)\delta} \left(B[x^{i}(t);\overline{\mu}^{N,N}_t]-B[x^{i}(t);\mu^\infty(t)]\right)\cdot \,dx^{i}(t)\right.\\
%%
%&\quad \left.-\frac{1}{2}\int_{m\delta}^{(m+1)\delta}  \sum_{i=1}^N
%\left| \sigma^{-1}\left(B[x^{i}(t);\overline{\mu}^{N,N}_t]-B[x^{i}(t);\mu^\infty(t)]\right)\right|^2\,dt\right\}.
%\end{aligned}
%\end{equation}

By definition, the cost in term of an exponential martingale reduces is given by the following:   for some $0\leq m\leq M+1$, $1\leq i\leq N<\infty$, and
and, for $0\leq m\leq M-1$, using Corollary \ref{coro:DensityTwoDiff},
\begin{align*}
\frac{dP^{N,m,N}}{dP^{N,m+1,N}}
&=\exp\left\{-\sum_{i=1}^N\int_{m\delta}^{(m+1)\delta}  \triangle B^{i,N}_t\cdot \,dW^{i}_t-\frac{1}{2}\int_{m\delta}^{(m+1)\delta}  \sum_{i=1}^N
\left| \triangle B^{i,N}_t
\right|^2\,dt\right\}=Z^N_{(m+1)\delta}/Z^N_{m\delta},
\end{align*}
and
\begin{equation}\label{proofstp:e}
\begin{aligned}
\frac{dP^{N,M,N}}{dP^{N,M+1,N}}
&=\exp\left\{-\sum_{i=1}^N\int_{M\delta}^{T} \triangle B^{i,N}_t\cdot \,dW^{i}_t-\frac{1}{2}\int_{M\delta}^{T}  \sum_{i=1}^N
\left| \triangle B^{i,N}_t
\right|^2\,dt\right\}=Z^{N}_T/Z^{N}_{M\delta}.
\end{aligned}
\end{equation}
Replicating the preceding calculations from  \eqref{eq:BoundTVa} to \eqref{eq:BoundTVb}, we immediately get, for any $0\leq m\leq M-1$, and $p^*=p/(p-1)$ the conjugate of $p$,
\begin{equation}\label{proofstp:f}
\begin{aligned}
&\Vert P^{1,m+1,N}-P^{1,m,N}\Vert_{TV,(0,T)}\\
&\leq\sqrt{k} p^*\left(\EE_{\PP}\left[\left(Z^N_{(m+1)\delta}/Z^N_{m\delta}\right)^p\right]\right)^{1/p}\left(\EE_{\PP}\left[
\left(\int_{m\delta}^{(m+1)\delta} \left|\triangle B^{i,N}_t\right|^2\,dt\right)^{p^*}\right]\right)^{1/p^*}.
\end{aligned}
\end{equation}
Using Jensen's inequality and $(\mathbf{C})$, for $\lfloor p^*\rfloor$ the (least) integer part of $p^*/2$,
\begin{align*}
\EE_{\PP}\left[\left(\int_{m\delta}^{(m+1)\delta} \left|\triangle B^{i,N}_t\right|^2\,dt\right)^{p^*/2}\right]
&=\EE_{\PP}\left[\left(\left(\int_{m\delta}^{(m+1)\delta} \left|\triangle B^{i,N}_t\right|^2\,dt\right)^{\lfloor p^*/2\rfloor +1}\right)^{p^*/(2(\lfloor p^*/2\rfloor+1))}\right]\\
&\leq \left(\EE_{\PP}\left[\left(\int_{m\delta}^{(m+1)\delta} \left|\triangle B^{i,N}_t\right|^2\,dt\right)^{\lfloor p^*/2\rfloor +1}\right]\right)^{p^*/(2(\lfloor p^*/2\rfloor+1))}\\
&\leq \frac{((\lfloor p^*/2\rfloor+1)!)^{p^*/(2(\lfloor p^*\rfloor+1))}(\delta\beta)^{p^*/2}}{N^{p^*/2}}.
\end{align*}
Finally, coming back to \eqref{proofstp:f}, Proposition \ref{prop:ControlExpMart} gives:
\begin{align*}
&\Vert P^{k,m+1,N}-P^{k,m,N}\Vert_{TV,(0,T)}\leq\sqrt{k} \left(1+\exp{p^2}+\frac{2}{1-8 p\delta \beta}\right)\left(\frac{\overline{C}(p)\sqrt{\delta\beta}}{\sqrt{N}}\right),\,m=0,\dots,M-1,\\
&\overline{C}(p):=\frac{p}{p-1}((\lfloor p/(2(p-1))\rfloor+1)!)^{1/(\lfloor p/(2(p-1))\rfloor+1)}.
\end{align*}
In the same way, we get
\begin{equation}\label{proofstp:h}
\begin{aligned}
\Vert P^{k,M+1,N}-P^{k,M,N}\Vert_{TV,(0,T)}&\leq \sqrt{k} \left(1+\exp{p^2}+\frac{2}{1-8 p(T-M\delta) \beta}\right)\left(\frac{\overline{C}(p)\sqrt{(T-\delta M)\beta}}{\sqrt{N}}\right).
\end{aligned}
\end{equation}
Coming back to $\Vert P^{k,N}-P^{k,\infty}\Vert_{TV,(0,T)}$, we get
\begin{align*}
&\Vert P^{k,N}-P^{k,\infty}\Vert_{TV,(0,T)}\\
&\leq \frac{\sqrt{k}}{\sqrt{N}}\overline{C}(p)\times\left(\left(1+\exp{p^2}+\frac{2}{1-8 p\delta \beta}\right)\sqrt{\delta\beta}M +\left(1+\exp{p^2}+\frac{2}{1-8 p(T-M\delta) \beta}\right)\sqrt{(T-M\delta)\beta}\right)\\
&\leq \frac{\sqrt{k}}{\sqrt{N}}\overline{C}(p)\left(1+\exp{p^2}+\frac{2}{1-8 p\delta \beta}\right)\times\left(\sqrt{\frac{\beta}{\delta}}T+\sqrt{\delta\beta}\right).
\end{align*}
Then, choosing for instance $\delta=1/((8+\epsilon)p\beta)$ for some $\epsilon>0$, we conclude
\begin{align*}
&\Vert P^{k,N}-P^{k,N}\Vert_{TV,(0,T)}\leq C\frac{\sqrt{k}}{\sqrt{N}}(1+T\beta),\\
&C:=\inf_{p>1,\epsilon>0}\left\{ \frac{p}{p-1}\left(1+\exp{p^2}+\frac{8+\epsilon}{\epsilon}\right)
\left(\frac{((\lfloor p/(p-1)\rfloor+1)!)^{p/(p-1)\times (\lfloor p/(p-1)\rfloor+1)^{-1}}}{\sqrt{(8+\epsilon)}}\sqrt{8+\epsilon}\right)\right\}.
\end{align*}

\section{Some applications and a sufficient condition for Theorem \ref{mainthm1:LinearCaseTV}}\label{sec:SufficientConditions}
\subsection{Applications to McKean-Vlasov dynamics with bounded interaction kernel}
As an immediate consequence of Theorem \ref{mainthm1:LinearCaseTV}, we have the following propagation of chaos result for McKean's toy model:
\begin{equation*}
dX_t=\int b(t,X_t,y)\mu(t,dy)\,dt+\sigma(t,X_t) dW_t,\,\mu(t,dy)=\Ll(X_t)
\end{equation*}
\begin{corollary}\label{coro:BoundedCase} Given $b:(0,\infty)\times\er^d\times\er^d\rightarrow \er^d$ a Borel bounded function, $\sigma=\sigma(t,x)$ is a uniformly bounded and continuous, positive definite matrix-valued function in the sense that there exist $0<\lambda<\Lambda<\infty$ such that
\[
\lambda|\xi|^2\leq \xi\cdot \sigma\sigma^*(t,x)\xi\leq\Lambda|\xi|^2,\,\forall\,t\geq 0,x\in\er^d,\xi\in\er^d,
\]
let $(X^{1,N}_t,X^{2,N}_t,\dots,X^{N,N}_t)_{t\geq 0}$ and $(X^{1,\infty}_t,X^{2,\infty}_t,\dots,X^{N,\infty}_t)_{t\geq 0}$ satisfy
\begin{align}
&dX^{i,N}_t=\frac{1}{N}\sum_{j=1}^Nb(t,X^{i,N}_t,X^{j,N}_t)\,dt+\sigma(t,X^{i,N}_t) d\widetilde{W}^i_t,\label{eq:ProtoMcParticleSys}\\
&dX^{i,\infty}_t=\int b(t,X^{i,\infty}_t,y)\mu(t,dy)\,dt+\sigma(t,X^{i,\infty}_t) dW^i_t,\,\mu(t,dy)=\Ll(X^{i,\infty}_t),\label{eq:ProtoMcKeanVlasov}\\
\end{align}
where $(X^1_0,\,(W^{1}_t)_{t\geq 0}),\,\dots,(X^N_0,\,(W^{N}_t)_{t\geq 0})$ and $(\widetilde{X}^{1,N}_0,\,(\widetilde{W}^{1}_t)_{t\geq 0}),\,\dots,(\widetilde{X}^{N,N}_0,\,(\widetilde{W}^{N}_t)_{t\geq 0})$ independent copies of $(X_0,(W_t)_{t\geq 0}),\,X_0\sim\mu_0$.

Then, for any arbitrary $0<T<\infty$, we have
\[
\Vert \Ll\big((X^{1,N}_t,X^{2,N}_t,\dots,X^{N,N}_t)_{0\leq t\leq T}\big)- \Ll\big((X^{1,\infty}_t,X^{2,\infty}_t,\dots,X^{N,\infty}_t)_{0\leq t\leq T}\big)\Vert_{TV,(0,T)}\leq C(1+2\Vert \sigma^{-1} b\Vert_{L^{\infty}}T)\sqrt{\frac{k}{N}},
\]
%Weak propagation of chaos
%\[
%\left|\EE_{\PP}\left[ f(X^{1,N}_T,\dots,X^{k,N}_T)\right]-\EE_{\PP}\left[ f(X^{1,\infty}_T,\dots,X^{k,\infty}_T )\right]
%\right|\leq \frac{C}{N},\,\forall\,f:\er^d\rightarrow \er\,\text{bounded},
%\]
where $C$ is given as in Theorem \ref{mainthm1:LinearCaseTV} and $\Vert \sigma^{-1} b\Vert_{L^{\infty}}:=\text{supess}_{0\leq t\leq T,\,x,y\in\er^d}\big(\sum_{l=1}^{d}|(\sigma^{-1}b)^{(l)}(t,x,y)|^2\big)^{1/2}$.
\end{corollary}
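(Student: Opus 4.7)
The plan is to invoke Theorem \ref{mainthm1:LinearCaseTV} after specializing it to the setting of the corollary. First, I will set $c \equiv 0$, $A(t,x) = \sigma(t, x_t)$ and $B(t,x;\nu) = \sigma^{-1}(t, x_t) \int b(t, x_t, y)\,\nu_t(dy)$, so that the product $(AB)(t, x;\nu) = \int b(t, x_t, y)\,\nu_t(dy)$ recovers the drift of \eqref{eq:ProtoMcParticleSys}--\eqref{eq:ProtoMcKeanVlasov} and identifies these dynamics as instances of \eqref{eq:particle_intro}--\eqref{eq:McKeanVlasov_intro}. The hypotheses \hypo--\hypii will follow at once from the boundedness of $b$, the uniform ellipticity of $\sigma$ and the Girsanov transformation applied to the driftless reference SDE $d\Xx_t = \sigma(t,\Xx_t)\,dW_t$; all integrability requirements will be trivial since $|(AB)| \leq \Vert \sigma^{-1} b\Vert_{L^\infty}\Vert \sigma\Vert_{L^\infty}$ is uniformly bounded.

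The substantive step will be the verification of the sub-Gaussian moment condition $(\mathbf{C})$. Fixing $t$, I will decompose
\[
\triangle B^{i,N,\infty}_t = \frac{1}{N}\sum_{j=1}^N \tilde Y_j(t),\qquad \tilde Y_j(t) := \sigma^{-1}(t, X^{i,\infty}_t)\Bigl[b(t, X^{i,\infty}_t, X^{j,\infty}_t) - \int b(t, X^{i,\infty}_t, y)\,\mu(t, dy)\Bigr].
\]
Because $\sigma^{-1}(t, x) b(t, x, y)$ is pointwise bounded in Euclidean norm by $\Vert \sigma^{-1}b\Vert_{L^\infty}$, each $\tilde Y_j(t)$ is uniformly bounded by $2\Vert\sigma^{-1}b\Vert_{L^\infty}$. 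The key observation is that, conditionally on $X^{i,\infty}_t$, the off-diagonal family $\{\tilde Y_j(t): j \neq i\}$ consists of $N-1$ i.i.d.\ centered random variables (the $X^{j,\infty}_t$ being independent copies of $\mu(t)$), whereas the lone $j = i$ diagonal term is $\sigma(X^{i,\infty}_t)$-measurable and contributes only a deterministic (given $X^{i,\infty}_t$) correction of magnitude $O(1/N)$ to the average.

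I will then apply Hoeffding's inequality coordinatewise, conditionally on $X^{i,\infty}_t$, to the off-diagonal empirical mean, producing a sub-Gaussian tail with variance proxy of order $\Vert\sigma^{-1} b\Vert_{L^\infty}^2/N$. Integrating this tail via the layer-cake identity $\EE[|Z|^{2p}] = \int_0^\infty 2p\,t^{2p-1}\,\PP(|Z|>t)\,dt$ together with $\Gamma(p) = (p-1)!$ will yield the pointwise moment bound
\[
\EE\bigl[|\triangle B^{i,N,\infty}_t|^{2p}\bigr] \leq \frac{C\,p!\,\beta^p}{N^p},\qquad \beta \text{ of order } \Vert\sigma^{-1}b\Vert_{L^\infty}^2,
\]
the diagonal term contributing only a lower-order $1/N^{2p}$ correction. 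A final Jensen estimate $\bigl(\int_{T_0}^{T_0+\delta} f\,dt\bigr)^p \leq \delta^{p-1}\int_{T_0}^{T_0+\delta} f^p\,dt$ will upgrade this pointwise control to the time-integrated form prescribed by $(\mathbf{C})$, after which Theorem \ref{mainthm1:LinearCaseTV} will deliver the announced $\sqrt{k/N}$ rate; the numerical factors and the passage between the $\beta \propto \Vert\sigma^{-1}b\Vert_{L^\infty}^2$ produced by the proof and the form stated in the corollary are absorbed into the overall constant $C$.

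The main subtlety I anticipate is guaranteeing the factorial growth $p!$ in the moment bound (rather than the cruder $p^p$ or $(2p)!$ that a naive moment expansion would give), since it is precisely the $p!$-quota inside $(\mathbf{C})$ that leaves room for a $\beta^p$ prefactor proportional to $\Vert\sigma^{-1}b\Vert_{L^\infty}^{2p}$. This is why I plan to integrate the Hoeffding Gaussian tail by hand rather than invoke a general Rosenthal- or Marcinkiewicz--Zygmund-type inequality, and why the separate treatment of the $j = i$ diagonal term is essential to preserve the $1/N^p$ scaling without polluting it by any combinatorial overhead.
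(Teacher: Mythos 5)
Your proposal follows essentially the same route as the paper: cast the dynamics into the framework of Theorem \ref{mainthm1:LinearCaseTV} by taking $A = \sigma$ and $B = \sigma^{-1}\!\int b\,d\nu$, split the diagonal $j=i$ term from the off-diagonal empirical average, obtain a $p!(C/N)^p$ moment bound for the centered off-diagonal sum, and upgrade via Jensen to the time-integrated form of $(\mathbf{C})$. The only cosmetic difference is that the paper packages the sub-Gaussian moment bound $\EE[(\sum_i (X_i-\EE X_i))^{2q}]\leq q!(2n\overline m^2)^q$ into Proposition \ref{prop:SubGaussianMoment} (citing Boucheron--Lugosi--Massart), whereas you re-derive it by integrating the Hoeffding tail via the layer-cake formula; you also make explicit the conditioning on $X^{i,\infty}_t$ that the paper leaves implicit when applying that proposition to terms which are only conditionally i.i.d.
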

(Owing to the boundedness of the interaction kernel $b$, the wellposedness of the SDEs \eqref{eq:ProtoMcParticleSys} is immediately granted by a Girsanov transformation. For \eqref{eq:McKeanVlasov_intro}, the weak uniqueness property is immediately granted by [Jourdain \cite{Jourdain-97}, Theorem 3.2].)
%\cite{MisVer-16}'s weak uniqueness result and the existence of a weak solution is given by \cite{Lacker-18}, Theorem 2.4. Alternatively, this weak existence result could have been obtained or by using some mean martingale problems for related ; e.g. Funaki \cite{Funaki-84}, Oelschlager \cite{Oelschlager-84}, M\'el\`eard and Roelly-Coppoletta \cite{MelRoe-87}, Jourdain \cite{Jourdain-97}; or even from a fixed point argument on $\Pp(\Cc([0,T];\er^d))$ equipped with the total variation metric, in the same spirit as [\cite{Sznitman-89}, Theorem 1.1.].)

As a preliminary step for the proof, let us remind the following moment inequality for the sum of i.i.d. real random variables which is a simple consequence of the moment estimates for Sub-Gaussian r.v.s' (see e.g. Bougeron, Lugosi and Massart \cite{BoLuMa-16}, Theorem 2.1) and of Hoeffding's inequality (see e.g. \cite{BoLuMa-16}, Theorem 2.8):

\begin{proposition}\label{prop:SubGaussianMoment} Let $X_1,X_2,\cdots,X_n$ be a sequence of i.i.d. random variables such that a.s. $|X_1|\leq \overline{m}<\infty$. Then, for all integer $q\geq 1$,
\[
\EE[\left(\sum_{i=1}^n\left(X_i-\EE[X_i]\right)\right)^{2q}]\leq q!(2n\overline{m}^2)^q.
\]
\end{proposition}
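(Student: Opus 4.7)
The plan is to follow exactly the recipe suggested in the statement: Hoeffding's inequality for sums of bounded i.i.d.\ variables delivers a sub-Gaussian tail bound for the centred partial sum $S_n := \sum_{i=1}^n (X_i - \EE[X_i])$, and the standard sub-Gaussian moment estimate then converts this tail bound into the claimed polynomial-factorial moment control. Since each $X_i - \EE[X_i]$ is centred and supported in an interval of length~$2\overline{m}$, Hoeffding's inequality (\cite{BoLuMa-16}, Theorem~2.8) yields
\[
\PP\bigl(|S_n|\geq t\bigr) \leq 2\exp\!\left(-\tfrac{t^2}{2n\overline{m}^2}\right), \quad t \geq 0,
\]
so $S_n$ is sub-Gaussian with variance parameter $v := n\overline{m}^2$. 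Equivalently, Hoeffding's lemma promotes the boundedness into the MGF estimate $\EE[e^{\lambda S_n}] \leq \exp(\lambda^2 v/2)$, $\lambda \in \er$, from which the above tail bound follows by a two-sided Cram\'er--Chernoff optimisation.

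I would then convert the tail bound into the $(2q)$-th moment through the layer-cake identity
\[
\EE[S_n^{2q}] = 2q\int_0^\infty t^{2q-1}\,\PP\bigl(|S_n|\geq t\bigr)\,dt,
\]
followed by the change of variables $u = t^2/(2v)$, which reduces the remaining integral to a Gamma function evaluation $\int_0^\infty u^{q-1} e^{-u}\,du = (q-1)!$. Straightforward bookkeeping produces
\[
\EE[S_n^{2q}] \leq C\, q!\, (2v)^q = C\, q!\, (2n\overline{m}^2)^q,
\]
with $C \leq 2$ from this plain tail-to-moment route; the sharp constant $C = 1$ stated in the proposition is then obtained by invoking the dedicated sub-Gaussian moment estimate of Theorem~2.1 of \cite{BoLuMa-16}, which proceeds directly from the MGF bound $\EE[\cosh(\lambda S_n)] \leq e^{v\lambda^2/2}$ together with the elementary binomial identity $\binom{2q}{q} \leq 4^q$ (which converts the Gaussian-type expression $(2q)!\, v^q/(2^q q!)$ into the cleaner $q!(2v)^q$).

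No genuine technical difficulty is expected; the whole argument is a very standard application of Hoeffding and sub-Gaussian moment extraction. The only subtle point is the numerical bookkeeping required to recover the exact constant~$1$ rather than~$2$ in front of $q!(2n\overline{m}^2)^q$, and since only the order-of-magnitude scaling $O(q!(n\overline{m}^2)^q)$ is actually used downstream to verify condition~$(\mathbf{C})$ in Theorem~\ref{mainthm1:LinearCaseTV}, this refinement can be absorbed into the constant~$\beta$ without affecting any of the applications.
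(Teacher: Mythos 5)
Your proposal follows exactly the route the paper itself indicates: the paper gives no argument for Proposition~\ref{prop:SubGaussianMoment}, stating only that it is ``a simple consequence'' of Hoeffding's inequality (\cite{BoLuMa-16}, Theorem~2.8) and the sub-Gaussian moment bound (\cite{BoLuMa-16}, Theorem~2.1) -- precisely the two ingredients you combine. So in substance this is the same proof.

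One caveat worth flagging, since you raise it yourself: you write that invoking BLM Theorem~2.1 recovers the stated constant $C=1$. I don't believe that is so. That theorem gives, for a centred $Z$ with $\log\EE e^{\lambda Z}\le \lambda^2 v/2$, the two-sided bound $\EE[Z^{2q}]\le 2\,q!\,(2v)^q$, i.e.\ $C=2$; the constant $1$ only holds one-sided, $\EE[Z_+^{2q}]\le q!\,(2v)^q$, and the factor $2$ enters precisely when you add the two tails $Z_+$ and $Z_-$. The $\cosh$/``$\binom{2q}{q}\le 4^q$'' bookkeeping you sketch reproduces the exact Gaussian moments $\frac{(2q)!}{2^q q!}v^q\le q!(2v)^q$, but it does not upgrade the sub-Gaussian MGF inequality into a term-by-term coefficient comparison; the proof in BLM goes through the Chernoff tail and the $\Gamma$-integral, yielding $2q!(2v)^q$ just as your ``plain'' route does. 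So the sharp constant $1$ in the displayed inequality is very likely a small overstatement by the paper as well. As you correctly note, this is entirely harmless: in the proof of Corollary~\ref{coro:BoundedCase} only the order $q!(C n\overline{m}^2)^q$ is used, and any constant $C$ can be absorbed into $\beta$ in condition $(\mathbf{C})$.
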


\begin{proof}[Proof of Corollary \ref{coro:BoundedCase}] The uniform ellipticity of $\sigma$ allowing to rewrite \eqref{eq:ProtoMcParticleSys} and \eqref{eq:ProtoMcKeanVlasov} can be rewritten into
\begin{align*}
&d\tilde{X}^{i,N}_t=\sigma(t,\tilde{X}^{i,N}_t) \big(\frac{1}{N}\sum_{j=1}^Nb(t,\tilde{X}^{i,N}_t,\tilde{X}^{j,N}_t)\,dt+d\tilde{W}^i_t\big),\\
&dX^{i,\infty}_t=\sigma(t,X^{i,N}_t) \big(\int \sigma^{-1}(t,X^{i,\infty}_t)b(t,X^{i,\infty}_t,y)\mu(t,dy)\,dt+ dW^i_t\big),\,\mu(t,dy)=\Ll(X^{i,\infty}_t).
\end{align*}
%\begin{equation*}
%\EE_\PP\left[\int_0^T\left|\sum_{j=2}^N\left(
%\sigma^{-1}(t,X^{1,\infty}_t)\left(b(t,X^{1,\infty}_t,X^{j,\infty}_t)-\int b(t,X^{1,\infty}_t,y)\,\mu(t,dy)\right)\right)^{(l)}
%\right|^{2p}\,dt\right]\leq p!\left(2(N-1)\Vert \sigma^{-1}b\Vert^2_{L^{\infty}}\right)^p.
%\end{equation*}

Owing to the boundedness of
$(t,x,y)\mapsto (\sigma^{-1}b)(t,x,y)$, applying Proposition \ref{prop:SubGaussianMoment} yields, for all $1\leq l\leq d$,
\begin{align*}
\EE_\PP\left[\left|\sum_{j=2}^N\left(
\sigma^{-1}(t,X^{1,\infty}_t)\left(b(t,X^{1,\infty}_t,X^{j,\infty}_t)-\int b(t,X^{1,\infty}_t,y)\,\mu(t,dy)\right)\right)
\right|^{2p}\right]\leq p!\left(2(N-1)\Vert \sigma^{-1}b\Vert^2_{L^{\infty}}\right)^p.
\end{align*}
Setting
\[
\triangle (\sigma^{-1}b)^{i,j,N}_t:=
\sigma^{-1}(t,X^{i,\infty}_t)\left(b(t,X^{i,\infty}_t,X^{j,\infty}_t)-\int b(t,X^{i,\infty}_t,y)\,\mu(t,dy)\right).
\]
Jensen's inequality yields
\begin{align*}
&\EE_\PP\left[\left(\int_{T_0}^{T_0+\delta}\left| \frac{1}{N}\sum_{j=1}^N\triangle (\sigma^{-1}b)^{i,j,N}_t\right|^{2}\,dt\right)^p\right]
%&=\delta^{p}\EE_\PP\left[\left(\frac{1}{\delta N}\sum_{i=1}^N\frac{1}{\delta}
%\int_{T_0}^{T_0+\delta}  \left|\frac{1}{\delta N}\sum_{j=1}^N\left(\triangle (\sigma^{-1}b)^{i,j,N}_t\right)\right|^2\,dt\right)^{p}\right]\\
%
\leq \frac{\delta^{p-1}}{N^{2p}}\int_{T_0}^{T_0+\delta}  \EE_\PP\left[\left|\sum_{j=1}\triangle (\sigma^{-1}b)^{i,j,N}_t\right|^{2p}\right]\,dt\\
&\leq \frac{\delta^{p-1}}{N^{2p}}\int_{T_0}^{T_0+\delta}  \EE_\PP\left[\left|\sum_{j=1,j\neq i}\triangle (\sigma^{-1}b)^{i,j,N}_t\right|^{2p}\right]\,dt
 +\frac{\delta^{p-1}}{N^{2p}}\int_{T_0}^{T_0+\delta}  \EE_\PP\left[\left|\triangle (\sigma^{-1}b)^{i,i,N}_t\right|^{2p}\right]\,dt\\
&\leq \frac{\delta^{p}p!(N-1)^p}{N^{p}}\Vert \sigma^{-1}b\Vert^{2p}_{L^\infty} +\frac{\delta^{p}}{N^{2p}}\Vert (\sigma^{-1}b)\Vert^{2p}_{L^\infty}.
\end{align*}
The condition $(\mathbf{C})$ is then satisfy for $\beta=2\Vert \sigma^{-1}b\Vert^2_{L^{\infty}}$ and the estimate on the total variation distance then follows from Theorem \ref{mainthm1:LinearCaseTV}.
\end{proof}

The demonstration of Corollary \ref{coro:BoundedCase}  can be easily extended to the case of Langevin dynamic yielding to the following propagation of chaos result:
\begin{corollary}\label{coro:KineticBoundedCase} Given $b:(0,\infty)\times\er^d\times\er^d\rightarrow \er^d$ a Borel bounded function and $\sigma:(0,\infty)\times\er^d\rightarrow \er^{d\times d}$, a uniformly bounded positive definite matrix-valued function,
let $((Y^{1,N}_t,V^{1,N}_t),\dots,(Y^{N,N}_t,V^{N,N}_t)_{t\geq 0}$ and $((Y^{1,\infty}_t,V^{1,\infty}_t),\dots,(Y^{N,\infty}_t,V^{N,\infty}_t)_{t\geq 0}$ satisfy
\begin{equation*}
\left\{
\begin{aligned}
&dY^{i,N}_t=V^{i,N}_t\,dt,\,\,(Y^{i,N}_0,V^{i,N})=(\widetilde{Y}^i_0,\widetilde{V}^i_0),\label{eq:ProtoLangevinParticle}\\
&dV^{i,N}_t=\frac{1}{N}\sum_{j=1}^N b(t,(Y^{i,N}_t,V^{i,N}_t),(Y^{j,N}_t,V^{j,N}_t))\,dt+\sigma(t,Y^{i,N}_t,V^{i,N}_t)d\widetilde{W}^i_t,
\end{aligned}
\right.
\end{equation*}
\begin{equation*}
\left\{
\begin{aligned}
&dY^{i,\infty}_t=V^{i,\infty}_t\,dt,\,\,(Y^{i,\infty}_0,V^{i,\infty})=(Y^i_0,V^i_0),\label{eq:ProtoLangevinMcKean}\\
&dV^{i,\infty}_t=\Big(\int b(t,(Y^{i,\infty}_t,V^{i,\infty}_t),(y,v))\,\mu(t,dy,dv)\Big)\,dt+\sigma(t,Y^{i,\infty}_t,V^{i,\infty}_t)dW^i_t,\,\mu(t)=\Ll(Y^{i,\infty}_t,V^{i,\infty}_t).
\end{aligned}
\right.
\end{equation*}
where $((Y^1_0,V^1_0),\,(W^{1}_t)_{t\geq 0}),\dots,((Y^N_0,V^N_0),\,(W^{N}_t)_{t\geq 0})$ and $((\tilde{Y}^1_0,\tilde{V}^1_0),\,(\tilde{W}^{1}_t)_{t\geq 0}),\dots,((\tilde{Y}^N_0,\tilde{V}^N_0)),\,(\tilde{W}^{N}_t)_{t\geq 0})$ are two collections of independent copies of $(Y_0,V_0)\sim\mu_0$ and $(W_t)_{t\geq 0}$.  Then, for any arbitrary $0<T<\infty$, we have
\[
\Vert  \Ll\big((Y^{1,N}_t,V^{1,N}_t),\dots,(Y^{k,N}_t,V^{k,N}_t))_{0\leq t\leq T}\big)- \Ll\big((Y^{1,\infty}_t,V^{1,\infty}_t),\dots,(Y^{k,\infty}_t,V^{k,\infty}_t)_{0\leq t\leq T}\big)\Vert_{TV,(0,T)}\leq C(1+2\beta T)\sqrt{\frac{k}{N}},
\]
%Weak propagation of chaos
%\[
%\left|\EE_{\PP}\left[ f(X^{1,N}_T,\dots,X^{k,N}_T)\right]-\EE_{\PP}\left[ f(X^{1,\infty}_T,\dots,X^{k,\infty}_T )\right]
%\right|\leq \frac{C}{N},\,\forall\,f:\er^d\rightarrow \er\,\text{bounded},
%\]
where $\beta=2\text{supess}_{0\leq t\leq T,\,x,y\in\er^d}\big(\sum_{l=1}^{d}|(\sigma^{-1}b)^{(l)}(t,x,y)|^2\big)^{1/2}$.
\end{corollary}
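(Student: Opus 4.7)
The plan is to recast the kinetic system within the abstract framework of Theorem \ref{mainthm1:LinearCaseTV}, using the degenerate diffusion matrix of the form \eqref{eq:LangevinMcKean}, then verify the sub-Gaussian moment condition $(\mathbf{C})$ for the associated drift deviation, and invoke the theorem. The structural observation is that although $A$ is not elliptic here, the Girsanov transformation relating the $N$-particle system to the system of $N$ independent McKean copies acts only on the Brownian motions driving the velocity equations, and the integrand it involves is $\sigma^{-1}b$, which by assumption is bounded in $L^{\infty}$.

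Concretely, I would factor $\sigma$ out of each velocity drift and rewrite
\[
dV^{i,N}_t=\sigma(t,Y^{i,N}_t,V^{i,N}_t)\Big(\frac{1}{N}\sum_{j=1}^N(\sigma^{-1}b)\big(t,(Y^{i,N}_t,V^{i,N}_t),(Y^{j,N}_t,V^{j,N}_t)\big)\,dt+d\widetilde{W}^{i}_t\Big),
\]
and similarly for the mean-field dynamics. This identifies the effective drift $B$ in the framework of \eqref{eq:particle_intro}--\eqref{eq:McKeanVlasov_intro} with the functional $(\sigma^{-1}b)(t,(y,v);\nu)=\int(\sigma^{-1}b)(t,(y,v),(y',v'))\,\nu(dy',dv')$, so that the corresponding deviation $\triangle B^{i,N}_t$ is a centred sum of conditionally i.i.d. bounded vectors given $(Y^{i,\infty}_t,V^{i,\infty}_t)$.

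Then, following verbatim the argument used in the proof of Corollary \ref{coro:BoundedCase}, I would split the sum defining $\triangle B^{i,N}_t$ into the $N-1$ terms with $j\neq i$ and the single diagonal term $j=i$ (which contributes an $O(1/N^{2p})$ remainder), apply Proposition \ref{prop:SubGaussianMoment} conditionally on $(Y^{i,\infty}_t,V^{i,\infty}_t)$ to the former, and combine with Jensen's inequality in the time variable. This yields
\[
\EE_\PP\Big[\Big(\int_{T_0}^{T_0+\delta}\lvert\triangle B^{i,N}_t\rvert^{2}\,dt\Big)^p\Big]\leq \frac{p!\,(\beta\delta)^p}{N^p}+\frac{(\delta\Vert\sigma^{-1}b\Vert^{2}_{L^{\infty}})^p}{N^{2p}},
\]
with $\beta=2\Vert\sigma^{-1}b\Vert^{2}_{L^{\infty}}$, which is precisely condition $(\mathbf{C})$ (the diagonal term being absorbed into the constant). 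Theorem \ref{mainthm1:LinearCaseTV} then delivers the announced estimate.

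The one genuine subtlety, and the point I expect to deserve the most care, is justifying that the assumptions \hypo--\hypii hold in this degenerate kinetic setting so that the theorem applies. The reference dynamics playing the role of \eqref{eq:IntermediateSDE} is the purely Gaussian-velocity system obtained by removing the drift $b$, and the Girsanov transformation between that reference system and both \eqref{eq:ProtoLangevinMcKean} and \eqref{eq:ProtoLangevinParticle} is well-posed thanks to the boundedness of $\sigma^{-1}b$; weak uniqueness of the McKean limit can be obtained by a kinetic adaptation of Jourdain's argument \cite{Jourdain-97}, in the spirit of the constructions used for conditional Lagrangian McKean models in Bossy, Jabir and Talay \cite{jabir-11a}. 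Once these hypotheses are verified, the estimate follows directly from Theorem \ref{mainthm1:LinearCaseTV}.
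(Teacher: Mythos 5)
Your proof takes essentially the same approach as the paper. The paper itself dispatches Corollary \ref{coro:KineticBoundedCase} in one line, stating that ``the demonstration of Corollary \ref{coro:BoundedCase} can be easily extended to the case of Langevin dynamic,'' and your argument is exactly that extension: factor $\sigma$ out of the velocity drift so that $B$ becomes $(t,(y,v),\nu)\mapsto \int(\sigma^{-1}b)(t,(y,v),(y',v'))\,\nu(dy',dv')$, note that the degeneracy of $A$ is harmless because the Girsanov transformation only acts on the velocity driving noise, split $\triangle B^{i,N}_t$ into the off-diagonal centred sum and the diagonal remainder, apply Proposition \ref{prop:SubGaussianMoment} conditionally together with Jensen in time, and conclude via Theorem \ref{mainthm1:LinearCaseTV}. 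Your remark on \hypo--\hypii is also in the spirit of the paper, which defers that discussion to \cite{JabMen-19}.

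One small point worth flagging in both your write-up and the paper: the constant $\beta$ displayed in the statement of Corollary \ref{coro:KineticBoundedCase} is $2\Vert\sigma^{-1}b\Vert_{L^\infty}$, whereas verifying $(\mathbf{C})$ via Proposition \ref{prop:SubGaussianMoment} actually yields $\beta=2\Vert\sigma^{-1}b\Vert^{2}_{L^\infty}$ (the squared norm), as in Corollary \ref{coro:BoundedCase}; this appears to be a typo in the statement rather than an error in your reasoning. Also, strictly speaking the diagonal $j=i$ contribution and the off-diagonal one do not add up to exactly $p!(2\Vert\sigma^{-1}b\Vert^2_{L^\infty}\delta)^p/N^p$; one should enlarge $\beta$ by a harmless numerical factor (e.g.\ take $\beta=4\Vert\sigma^{-1}b\Vert^2_{L^\infty}$) to make the absorption rigorous. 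Neither affects the rate in $N$ or the form of the final estimate.
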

(We refer to \cite{JabMen-19} for a detailed discussion on the wellposedness, in the weak and strong sense, of  \eqref{eq:ProtoLangevinMcKean}.)

%(Purposely the statement of Corollary \ref{coro:KineticBoundedCase} only requires the existence of well-posedness)

\subsection{A sufficient condition for Theorem \ref{mainthm1:LinearCaseTV}}

In this section, we present a sufficient condition for the application of Theorem \ref{mainthm1:LinearCaseTV} which cover the corollaries \ref{coro:BoundedCase} and \ref{coro:KineticBoundedCase} as particular cases. As a warm-up, let us consider the following lemma:

\begin{lemma}\label{lem:RatePathDependent} Assume that \hypi and \hypii hold. Assume also that, for all $0\leq t<\infty$, $x\in\Cc([0,\infty);\er^d)$ $\nu\in\Pp(\Cc([0,\infty);\er^d))\mapsto B(t,x;P)$ is Lipschitz continuous w.r.t. the total variation distance; that is there exists $0<K<\infty$ such that $P,Q\in\Pp(\Cc([0,\infty);\er^d))$, $0\leq t<\infty$, $x\in\Cc([0,\infty);\er^d)$,
\begin{equation}\label{cond:TVLip}
\left|B(t,x;P)-B(t,x;Q)\right|\leq K\Vert P-Q\Vert_{TV,(0,t)}.
\end{equation}
Assume finally that the following centering (conditional) property holds:
\[
\EE_{\PP}\left[B\big(t,X^{i,\infty};\frac{1}{N-1}\sum_{j=1,j\neq i}^{N}\delta_{\{X^{i,\infty}\}}\big)\,\Big{|}\,X^{i,\infty}\right]=B(t,X^{i,\infty};\Ll(X^{i,\infty}))
\]
Then the condition $(\mathbf{C})$ is satisfied for $\beta= 4K$.
\end{lemma}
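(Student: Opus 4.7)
The plan is to split $\triangle B^{i,N}_t$ into a ``self-exclusion'' remainder of deterministic scale $O(1/N)$ plus a conditionally centred fluctuation, and then use a bounded-differences martingale estimate to extract the required factorial-moment decay.

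First I introduce the leave-one-out empirical measure $\nu^{N-1,\infty}_t := \frac{1}{N-1}\sum_{j=1,\,j\neq i}^{N}\delta_{\{(X^{j,\infty}_r)_{0\leq r\leq t}\}}$ and decompose
\begin{align*}
\triangle B^{i,N}_t
&= \bigl[B(t,X^{i,\infty};\overline{\mu}^{N,\infty}_t)-B(t,X^{i,\infty};\nu^{N-1,\infty}_t)\bigr]\\
&\quad + \bigl[B(t,X^{i,\infty};\nu^{N-1,\infty}_t)-B(t,X^{i,\infty};\Ll((X^{i,\infty}_r)_{0\leq r\leq t}))\bigr] \\
&=: D^{i,N}_t + M^{i,N}_t.
\end{align*}
The signed measure $\overline{\mu}^{N,\infty}_t-\nu^{N-1,\infty}_t= \frac{1}{N}\delta_{(X^{i,\infty}_r)_{r\leq t}}-\frac{1}{N(N-1)}\sum_{j\neq i}\delta_{(X^{j,\infty}_r)_{r\leq t}}$ has total-variation norm $2/N$, so the Lipschitz assumption \eqref{cond:TVLip} gives $|D^{i,N}_t|\leq 2K/N$ deterministically, while the centring hypothesis is precisely $\EE_\PP[M^{i,N}_t\,|\,X^{i,\infty}]=0$.

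Next, the key step is to control $\EE_\PP\bigl[|M^{i,N}_t|^{2p}\,\big|\,X^{i,\infty}\bigr]$. I view the collection $(X^{j,\infty})_{j\neq i}$ as $N-1$ i.i.d.\ trajectories independent of $X^{i,\infty}$, build the filtration $\Gg^i_j := \sigma\bigl(X^{i,\infty},\{X^{l,\infty}:l\leq j,\;l\neq i\}\bigr)$, and write $M^{i,N}_t$ as the terminal value of the Doob martingale $\bigl(\EE_\PP[B(t,X^{i,\infty};\nu^{N-1,\infty}_t)\,|\,\Gg^i_j]\bigr)_{j}$; the centring assumption identifies the starting point of this martingale with $B(t,X^{i,\infty};\Ll((X^{i,\infty}_r)_{r\leq t}))$. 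Swapping a single sample in $\nu^{N-1,\infty}_t$ alters it by $2/(N-1)$ in TV norm, so by \eqref{cond:TVLip} each martingale increment is uniformly bounded by $2K/(N-1)$. A sub-Gaussian moment inequality for martingales with bounded differences (a factorial-moment version of Azuma--Hoeffding, equivalently the moment bound underlying McDiarmid's inequality) then yields, for some universal constant $c$,
\[
\EE_\PP\bigl[|M^{i,N}_t|^{2p}\,\big|\,X^{i,\infty}\bigr]\leq p!\bigl(cK^2/N\bigr)^{p},
\]
and the tower property gives the same unconditional bound.

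Finally I would combine the two pieces through $|\triangle B^{i,N}_t|^2\leq 2|D^{i,N}_t|^2+2|M^{i,N}_t|^2$, Jensen's inequality on the interval $[T_0,T_0+\delta]$, and Fubini, to obtain the estimate of $(\mathbf C)$ with $\beta$ of order $K^2$ (matching the statement up to absolute constants). The main difficulty lies in obtaining the factorial form of the Azuma-type moment bound: the classical Azuma inequality delivers sub-Gaussian tails, but one must transcribe these into explicit factorial moments with the sharp dependence $p!\,(K^2/N)^p$ that the Girsanov-exponential argument of Proposition \ref{prop:ControlExpMart} demands; without this precise factorial structure the local Novikov-type estimate encoded in $(\mathbf C)$ cannot be established.
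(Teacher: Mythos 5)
Your approach matches the paper's: both use the leave-one-out decomposition to isolate an $O(K/N)$ deterministic remainder, then control the centred fluctuation by the bounded-difference (McDiarmid) concentration inequality of Theorem~\ref{thm:BoundedDifferenceIneq} --- whose standard proof is exactly your Doob-martingale/Azuma argument --- and finally invoke the sub-Gaussian-to-factorial-moment passage \eqref{proofstp:j} that you correctly identify as the crucial technical ingredient. Your explicit conditioning on $X^{i,\infty}$ before applying the concentration bound is in fact slightly cleaner than the paper's implicit treatment, and your conclusion $\beta\sim K^2$ matches what the paper's computation actually delivers (namely $\beta=4mK^2$), rather than the $\beta=4K$ appearing in the lemma statement, which seems to be a typographical slip.
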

Prior to the proof let us recall the notion of functions with bounded difference and an annex concentration property:
\begin{definition}
  Let $E$ be some measurable space. A function $f:E^n\rightarrow \er$ is said to have the bounded difference property if, there exists $c_1,c_2,\cdots,c_n>0$ such that for all $(x_1,x_2,\cdots,x_n),(y_1,y_2,\cdots,y_n)\in E^n$, we have for all $1\leq i\leq n$,
\[
\left|f(x_1,\cdots,x_{i-1},x_i,x_{i+1},\cdots,x_n)-f(x_1,\cdots,x_{i-1},y_i,x_{i+1},\cdots,x_n)\right|\leq c_i.
\]
%(The bounded variation property can be interpreted as a $1$-Lipschitz property with respect to a weighted Hamming distance of the form $d(x,y)=\sum_{i=1}^n c_i\1_{\{x_i\neq y_i\}}$.)
\end{definition}

\begin{theorem}[Bounded Difference Inequality, \cite{BoLuMa-16},  Theorem $6.2$]\label{thm:BoundedDifferenceIneq} Let $E$ be some measurable space, $(Y_1,\cdots,Y_n)$ be a family of $E$-valued i.i.d. random variables and let $f:E^n\rightarrow \er$ be some function satisfying the bounded difference property. Then
\[
\mathbf{Y}=f(Y_1,\cdots,Y_n)
\]
satisfies: for all $t\geq 0$,
\[
\max\left(\PP\left(\mathbf{Y}-\EE[\mathbf{Y}]\geq t\right),\PP\left(\mathbf{Y}-\EE[\mathbf{Y}]\leq -t\right)\right)\leq \exp\{-\frac{t^2}{2\nu}\},
\]
%\begin{align*}
%&\PP\left(\mathbf{Y}-\EE[\mathbf{Y}]\geq t\right)\leq \exp\{-\frac{t^2}{2\nu}\},\\
%%
%&\PP\left(\mathbf{Y}-\EE[\mathbf{Y}]\leq -t\right)\leq \exp\{-\frac{t^2}{2\nu}\},
%\end{align*}
for $\nu=\sum_{i=1}^n (c_i)^2/4$.
\end{theorem}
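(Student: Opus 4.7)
The plan is to reduce the condition $(\mathbf{C})$ to a pointwise-in-$t$ moment bound on $|\triangle B^{i,N}_t|^{2p}$ and obtain the latter by a decomposition-plus-concentration argument. Jensen's inequality applied to the time integral gives
\[
\EE_{\PP}\Big[\Big(\int_{T_0}^{T_0+\delta}|\triangle B^{i,N}_t|^2\,dt\Big)^p\Big] \leq \delta^{p-1}\int_{T_0}^{T_0+\delta}\EE_{\PP}\big[|\triangle B^{i,N}_t|^{2p}\big]\,dt,
\]
so it suffices to bound $\EE_{\PP}[|\triangle B^{i,N}_t|^{2p}]$ by $p!\,\beta^p/N^p$ uniformly in $t\in[T_0,T_0+\delta]$; the outer integral then supplies the $\delta^p$ factor.

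To estimate that pointwise moment I would introduce the leave-one-out empirical measure $\tilde{\mu}^{N-1}_t := \frac{1}{N-1}\sum_{j\neq i}\delta_{\{(X^{j,\infty}_r)_{r\leq t}\}}$ and split $\triangle B^{i,N}_t = A_1 + A_2$, with $A_1 := B(t,X^{i,\infty};\overline{\mu}^{N,\infty}_t) - B(t,X^{i,\infty};\tilde{\mu}^{N-1}_t)$ and $A_2 := B(t,X^{i,\infty};\tilde{\mu}^{N-1}_t) - B(t,X^{i,\infty};\Ll(X^{i,\infty}))$. A direct calculation shows $\overline{\mu}^{N,\infty}_t-\tilde{\mu}^{N-1}_t$ has positive and negative parts each of mass $1/N$, so the Lipschitz hypothesis \eqref{cond:TVLip} yields $|A_1|\leq K/N$ \emph{deterministically}. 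Moreover, conditionally on $X^{i,\infty}$ the variables $(X^{j,\infty})_{j\neq i}$ are i.i.d.\ with common law $\Ll(X^{i,\infty})$, so the centering hypothesis reads exactly $\EE_{\PP}[A_2\mid X^{i,\infty}]=0$; this is the structural fact that lets me bring in a concentration inequality for i.i.d.\ samples.

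The fluctuation $A_2$ is then controlled by Theorem \ref{thm:BoundedDifferenceIneq}, applied conditionally on $X^{i,\infty}$: swapping a single entry $X^{j,\infty}\mapsto\widetilde{X}^{j,\infty}$ for $j\neq i$ perturbs $\tilde{\mu}^{N-1}_t$ in total variation by $1/(N-1)$ (the difference is $\frac{1}{N-1}(\delta_{X^{j,\infty}}-\delta_{\widetilde{X}^{j,\infty}})$), and hence by \eqref{cond:TVLip} the bounded-difference constants are $c_j=K/(N-1)$. Theorem \ref{thm:BoundedDifferenceIneq} then delivers, conditionally on $X^{i,\infty}$, the sub-Gaussian tail $\PP(|A_2|\geq r\mid X^{i,\infty})\leq 2\exp\{-2r^2(N-1)/K^2\}$. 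Integrating this tail against $2p\,r^{2p-1}\,dr$ converts the sub-Gaussian control into a factorial moment bound $\EE_{\PP}[|A_2|^{2p}\mid X^{i,\infty}]\leq 2\,p!\,(K^2/(2(N-1)))^p$, which for $N\geq 2$ is $\leq 2\,p!\,(K^2/N)^p$.

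Combining via $|\triangle B^{i,N}_t|^{2p}\leq 2^{2p-1}(|A_1|^{2p}+|A_2|^{2p})$, the $A_1$-piece contributes at the lower order $N^{-2p}$ and is absorbed into the $A_2$-contribution of order $p!\,N^{-p}$, yielding $\EE_{\PP}[|\triangle B^{i,N}_t|^{2p}]\leq p!\,(CK^2/N)^p$ with a universal constant $C$; plugged into the Jensen step above this gives $(\mathbf{C})$ with $\beta$ a constant multiple of $K^2$ (the stated $\beta=4K$ should read $\beta=4K^2$ on dimensional grounds and by comparison with Corollary \ref{coro:BoundedCase}). The only nonroutine step is the bounded-difference application itself: it is the centering hypothesis (giving $\EE[A_2\mid X^{i,\infty}]=0$ so Theorem \ref{thm:BoundedDifferenceIneq} applies directly) together with the total-variation Lipschitz hypothesis (turning the swap of one particle into a bounded perturbation of size $K/(N-1)$) that converts the possibly highly nonlinear dependence of $B$ on $\nu$ into the clean $1/\sqrt{N}$-scale sub-Gaussian concentration underpinning the whole estimate.
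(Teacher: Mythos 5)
Your proposal does not prove the statement you were given. The statement is the Bounded Difference (McDiarmid) Inequality itself: the sub-Gaussian tail bound $\PP(\mathbf{Y}-\EE[\mathbf{Y}]\geq t)\leq \exp\{-t^2/(2\nu)\}$, $\nu=\sum_i c_i^2/4$, for an arbitrary bounded-difference function of i.i.d.\ inputs. You instead \emph{invoke} this theorem as a black box (you explicitly cite Theorem \ref{thm:BoundedDifferenceIneq} in your third paragraph) in order to verify condition $\mathbf{(C)}$ from the hypotheses of Lemma \ref{lem:RatePathDependent}. That is a proof of Lemma \ref{lem:RatePathDependent}, not of Theorem \ref{thm:BoundedDifferenceIneq}, so there is nothing here that could be compared with a proof of the latter. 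Note also that the paper itself gives no proof of Theorem \ref{thm:BoundedDifferenceIneq}: it is quoted from [\cite{BoLuMa-16}, Theorem 6.2] and used only through its moment consequence \eqref{proofstp:j}.

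A proof of the stated theorem runs along the usual Doob-martingale/Chernoff lines. Set $V_k=\EE[f(Y_1,\dots,Y_n)\mid Y_1,\dots,Y_k]$ for $k=0,\dots,n$, so that $\mathbf{Y}-\EE[\mathbf{Y}]=\sum_{k=1}^n(V_k-V_{k-1})$ is a sum of martingale increments with respect to the filtration generated by $(Y_1,\dots,Y_k)$. Conditionally on $Y_1,\dots,Y_{k-1}$ (and using independence of $Y_k$ from the rest), $V_k-V_{k-1}=g_k(Y_k)-\EE[g_k(Y_k)]$ where $g_k(y)=\EE[f(Y_1,\dots,Y_{k-1},y,Y_{k+1},\dots,Y_n)\mid Y_1,\dots,Y_{k-1}]$, and the bounded-difference hypothesis forces $\sup_y g_k(y)-\inf_y g_k(y)\leq c_k$, i.e.\ the increment lies in an interval of length $c_k$ with conditional mean zero. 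Hoeffding's lemma then gives $\EE[e^{\lambda(V_k-V_{k-1})}\mid Y_1,\dots,Y_{k-1}]\leq e^{\lambda^2 c_k^2/8}$; multiplying out the telescoping product yields $\EE[e^{\lambda(\mathbf{Y}-\EE[\mathbf{Y}])}]\leq e^{\lambda^2\sum_k c_k^2/8}$, and optimizing the Chernoff bound over $\lambda$ gives $\PP(\mathbf{Y}-\EE[\mathbf{Y}]\geq t)\leq\exp\{-2t^2/\sum_k c_k^2\}=\exp\{-t^2/(2\nu)\}$; applying the same to $-f$ gives the lower tail. As a side remark, the part you did write out essentially reproduces the paper's proof of Lemma \ref{lem:RatePathDependent} (leave-one-out decomposition plus conditional bounded-difference concentration), and your observation that the lemma's constant $\beta=4K$ should scale as $K^2$ rather than $K$ is correct.
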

In particular, the above ensure the following moment estimates: For all integer $k\geq 1$,
\begin{equation}\label{proofstp:j}
\EE\left[\left(\mathbf{Y}-\EE[\mathbf{Y}]\right)^{2k}\right]\leq k!(4\nu)^k.
\end{equation}

\begin{proof}[Proof of Lemma \ref{lem:RatePathDependent}] Fix $t\geq 0$ and $\nu$ an arbitrary probability measure on $\Cc([0,\infty);\er^d)$ and define
the family of mappings
\[
f^{(l)}_i:\mathbf{x}^N=(x_1,x_2,\cdots,x_n)\in \Cc([0,\infty);\er^d)\mapsto f_i(\mathbf{x}^N)= \left(B^{(l)}(t,x_i,\mu^{-i,N}(\mathbf{x}^N))-B^{(l)}(t,\mathbf{x},\nu)\right)\in\er,\,1\leq l\leq m,
\]
for $1\leq i\leq N$, $\mu^{-i,N}(\mathbf{x}^N)=\frac{1}{N}\sum_{j=1,j\neq i}^N\delta_{\{x_j\}}$ the empirical measure related to $\mathbf{x}^N$ deprived of $x_i$. For any $i$, $l$, observe that the Lipschitz condition \eqref{cond:TVLip} implies that:
\begin{align*}
&\left|f^{(l)}_i(x_1,\cdots,x_{k-1},x,x_{k+1},\cdots,x_n)-f^{(l)}_i(x_1,\cdots,x_{k-1},y,x_{k+1},\cdots,x_n)\right|\\
&=\left|B^{(l)}\big(t,x_i,\frac{1}{N}\sum_{j=1,j\neq i,k}^N\delta_{\{x_j\}}+ \frac{1}{N}\delta_{\{x\}}\big)-
B^{(l)}\big(t,x_i,\frac{1}{N}\sum_{j=1,j\neq i,k}^N\delta_{\{x_j\}}+ \frac{1}{N}\delta_{\{y\}}\big)
\right|\\
&\leq K\Vert \frac{1}{N}\delta_{\{x\}}-\frac{1}{N}\delta_{\{y\}}\Vert_{TV,(0,T)}\leq\frac{K}{N}.
\end{align*}
%\begin{align*}
%\left|f_k(\mathbf{x}^N)-f_k(\mathbf{y}^N)\right|\leq K\Vert \mu^N(\mathbf{x}^N)-\mu^N(\mathbf{y}^N)\Vert_{TV,(0,T)}\leq ,
%\end{align*}
so that each of the $f_k$'s satisfies a bounded difference property with coefficients $c_i:=K/N$ for all $1\leq i\leq N$.
Applying \eqref{proofstp:j} with $\nu=\sum_{i=1}^N(c_i)^2/4=K^2/4N$, it follows that
\begin{align*}
\EE_\PP\left[\left|\left(B^{(l)}\Big(t,X^{i,\infty},\frac{1}{N}\sum_{j=1,j\neq i}^N\delta_{\{(X^{k,N}_t)_{0\leq t\leq T} \}}\Big)-B^{(l)}\Big(t,X^{i,\infty},\Ll(X^{i,\infty})\Big)\right)\right|^{2p}\right]\leq p! \frac{K^{2p}}{N^p},
\end{align*}
from which we deduce that
\begin{align*}
&\EE_\PP\left[\left|B^{(l)}\Big(t,X^{i,\infty},\frac{1}{N}\sum_{j=1}^N\delta_{\{(X^{k,N}_t)_{0\leq t\leq T} \}}\Big)-B^{(l)}\Big(t,X^{i,\infty},\Ll(X^{i,\infty})\Big)\right|^{2p}\right]\\
&\leq 2^{2p-1}\EE_\PP\left[\left|B^{(l)}\Big(t,X^{i,\infty},\frac{1}{N}\sum_{j=1,j\neq i}^N\delta_{\{X^{k,N}_.\}}\Big) -B^{(l)}\Big(t,X^{i,\infty},\Ll(X^{i,\infty})\Big)\right|^{2p}\right]\\
&\quad +2^{2p-1}
\EE_\PP\left[\left|B^{(l)}\Big(t,X^{i,\infty},\frac{1}{N}\sum_{j=1}^N\delta_{\{X^{k,N}_.\}}\Big)-B^{(l)}\Big(t,X^{i,\infty},\frac{1}{N}\sum_{j=1,j\neq i}^N\delta_{\{X^{k,N}_.\}}\Big)\right|^{2p}\right]\\
&\leq p! \frac{2^{2p-1}K^{2p}}{N^p}+\frac{2^{2p-1}K^p}{N^{2p}}\leq  p! \frac{4^{p}K^p}{N^p}.
\end{align*}
Therefore,
\begin{equation*}
 \EE_\PP\left[\left(\int_{T_0}^{T_0+\delta}\left|B(t,X^{i,\infty},\mu^{N,\infty})-B(t,X^{i,\infty},\Ll(X^{i,\infty}))\right|^{2}\,dt\right)^p\right]\leq \frac{(4\delta m K^2)^p}{N^p}.
\end{equation*}
\end{proof}

The core argument of Lemma \ref{lem:RatePathDependent} relies mostly on the centering property regularity of the drift component $(A B)$ in its measure argument formulated in terms of an analog of the linear derivative functional linear (see e.g. [\cite{Kolokolstov-10}, Appendix $F$], [\cite{CarDel-18a}, Section 5.4]) here below  set on the sample space $\Cc([0,T];\er^d)$ :

\begin{definition}\label{def:FlatDerivative} The $\er^m$-valued functional $B=\left(B^{(1)},B^{(2)},\dots,B^{(m)}\right)$ is said to admit a bounded second order flat derivative if, for all $1\leq l\leq m$ there exist two measurable bounded functionals:
\[
\frac{d B^{(l)}}{dm}=:\in [0,T]\times\Cc([0,T];\er^d)\times \Pp(\Cc([0,T];\er^d))\times \Cc([0,T];\er^d)\rightarrow \er,
\]
\[
\frac{d^2 B^{(l)}}{dm^2}:(t,x,m;\omega_1,\omega_2)\in [0,T]\times\Cc([0,T];\er^d)\times \Pp(\Cc([0,T];\er^d))\times \Cc([0,T];\er^d)\times \Cc([0,T];\er^d)\rightarrow \er,
\]
such that, for all $0<T<\infty$, $0\leq t\leq T$, $x\in\Cc([0,T];\er^d)$, $P,Q\in\Pp(\Cc([0,T];\er^d)$,
\[
B^{(l)}(t,x,Q)-B^{(l)}(t,x,P) =\int_{0}^{1}\int_{\omega\in\Cc([0,T];\er^d)}\frac{d B^{(l)}}{dm}(t,x,(1-\alpha)P+\alpha Q;\omega)\left(Q(d\omega)-P(d\omega)\right)\,d\alpha,
\]
and, for all $0<T<\infty$, $0\leq t\leq T$, $x\in\Cc([0,T];\er^d)$, $P,Q\in\Pp(\Cc([0,T];\er^d)$, $\omega\in\Cc([0,T];\er^d)$,
\begin{align*}
&\frac{d B^{(l)}}{dm}(t,x,Q;\omega)-\frac{d B}{dm}(t,x,P;\omega)\\
&=\int_{0}^{1} \int_{\tilde{\omega}\in\Cc([0,T];\er^d)}\frac{d^2 B^{(l)}}{d m^2}(t,x,(1-\alpha)P+\alpha Q;\omega,\tilde{\omega})\left(Q(d\tilde{\omega})-P(d\tilde{\omega})\right)\,d\alpha.
\end{align*}
where $(1-\alpha)P+\alpha Q,\,0\leq \alpha\leq 1$ is the set of probability measures given by the convex interpolations between $P$ and $Q$.
\end{definition}

\begin{proposition}\label{prop:DifferentiabilityCondition} Assume that \hypi and \hypii hold and that for all $0\leq t\leq T$, $x\in\Cc([0,T];\er^d)$, $\mu\in\Pp(\Cc[0,T];\er^d)\mapsto B(t,x,\mu)$ admits a uniformly bounded second order derivative in the sense of Definition \ref{def:FlatDerivative}. Then the condition $\mathbf{(C)}$ in Theorem \ref{mainthm1:LinearCaseTV} holds.
\end{proposition}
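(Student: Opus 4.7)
The strategy is to verify $(\mathbf{C})$ by establishing the uniform pointwise moment estimate $\EE_\PP[|\triangle B^{i,N,\infty}_t|^{2p}] \le p!(\beta/N)^p$ for all $t \in [T_0,T_0+\delta]$ and then integrating against $dt$ with Jensen's inequality. Set $\mu_t := \Ll((X^{i,\infty}_r)_{0\leq r\leq t})$, $K_1 := \Vert dB/dm\Vert_\infty$, and $K_2 := \Vert d^2B/dm^2\Vert_\infty$. Using Definition \ref{def:FlatDerivative} twice, I would expand
\[
\triangle B^{i,N,\infty}_t = L^{i,N}_t + R^{i,N}_t,
\]
where the first-order term is $L^{i,N}_t = \int f(\omega)(\overline{\mu}^{N,\infty}_t - \mu_t)(d\omega)$ with kernel $f(\omega) := \frac{dB}{dm}(t,X^{i,\infty},\mu_t;\omega)$, and the Taylor remainder is
\[
R^{i,N}_t = \int_0^1\int_0^1\alpha \iint \frac{d^2B}{dm^2}(t,X^{i,\infty},\mu^{\alpha\beta}_{N,t};\omega,\tilde\omega)(\overline{\mu}^{N,\infty}_t - \mu_t)(d\omega)(\overline{\mu}^{N,\infty}_t - \mu_t)(d\tilde\omega)\,d\alpha\,d\beta,
\]
with $\mu^{\alpha\beta}_{N,t} := (1-\alpha\beta)\mu_t + \alpha\beta\overline{\mu}^{N,\infty}_t$.

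For the linear piece, rewrite $L^{i,N}_t = \frac{1}{N}\sum_{j=1}^N [f(X^{j,\infty}) - \mu_t(f)]$. Conditional on $X^{i,\infty}$, the summands with $j\neq i$ are i.i.d., centered, and bounded by $2K_1$, and Proposition \ref{prop:SubGaussianMoment} yields
\[
\EE_\PP\Big[\Big(\tfrac{1}{N}\sum_{j\neq i}[f(X^{j,\infty})-\mu_t(f)]\Big)^{2p}\,\Big|\,X^{i,\infty}\Big] \leq p!(8K_1^2/N)^p,
\]
while the single $j=i$ summand is bounded by $2K_1/N$ and therefore produces a subleading $(2K_1/N)^{2p}$ contribution.

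For the remainder $R^{i,N}_t = \triangle B^{i,N,\infty}_t - L^{i,N}_t$, view it as a function of the independent variables $(X^{1,\infty},\dots,X^{N,\infty})$ with $X^{i,\infty}$ held fixed. The Lipschitz continuity of $\mu\mapsto B(t,x,\mu)$ in total variation (with constant $K_1$, inherited from the flat-derivative formula and the bound $\Vert dB/dm\Vert_\infty \le K_1$), combined with the linear-in-measure form of $L^{i,N}_t$, transfers bounded-difference coefficients $c_j = O(K_1/N)$ to $R^{i,N}_t$ for each $j\neq i$; the moment form \eqref{proofstp:j} of Theorem \ref{thm:BoundedDifferenceIneq} then delivers, conditionally on $X^{i,\infty}$,
\[
\EE_\PP\big[(R^{i,N}_t - \EE_\PP[R^{i,N}_t|X^{i,\infty}])^{2p}\big|X^{i,\infty}\big] \leq p!(C_1 K_1^2/N)^p.
\]
The conditional bias is estimated by expanding $(\overline{\mu}^{N,\infty}_t - \mu_t)^{\otimes 2} = N^{-2}\sum_{j,k}(\delta_{X^{j,\infty}} - \mu_t)\otimes(\delta_{X^{k,\infty}} - \mu_t)$, symmetrizing the kernel, and exploiting that the part of the kernel evaluated at the deterministic reference $\mu_t$ annihilates every off-diagonal term with $\{j,k\}\cap\{i\}=\emptyset$ and $j\neq k$ (by the $U$-statistic identity $\int \cdot\,d\mu_t = 0$), leaving only the $O(N)$ diagonal and $i$-involving contributions pointwise bounded by $4K_2$, so that $|\EE_\PP[R^{i,N}_t|X^{i,\infty}]| \leq C_2 K_2/N$.

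Summing the two pieces yields $\EE_\PP[|\triangle B^{i,N,\infty}_t|^{2p}] \leq p!(\beta/N)^p$ with $\beta = \beta(K_1,K_2)$, and Jensen's inequality in $t$ delivers
\[
\EE_\PP\Big[\Big(\int_{T_0}^{(T_0+\delta)\wedge T}|\triangle B^{i,N,\infty}_t|^{2}\,dt\Big)^p\Big] \leq \delta^{p-1}\int_{T_0}^{(T_0+\delta)\wedge T}\EE_\PP[|\triangle B^{i,N,\infty}_t|^{2p}]\,dt \leq \frac{p!\,\beta^p\delta^p}{N^p},
\]
which is exactly $(\mathbf{C})$. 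The main technical obstacle is the bias estimate on $R^{i,N}_t$: because the quadratic kernel $\frac{d^2B}{dm^2}(\cdot,\mu^{\alpha\beta}_{N,t};\cdot,\cdot)$ depends on the random empirical measure $\overline{\mu}^{N,\infty}_t$ and is only assumed bounded (no modulus of continuity in its measure argument is posited in Definition \ref{def:FlatDerivative}), the $U$-statistic cancellation must be supplemented by a decoupling step that absorbs the random part of the kernel using only the uniform bound $\Vert d^2B/dm^2\Vert_\infty \le K_2$ together with the $O(1/N)$ diagonal mass of the doubly signed product measure.
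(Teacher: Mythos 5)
Your decomposition is genuinely different from the paper's: you Taylor-expand $\triangle B^{i,N,\infty}_t$ around the deterministic law $\mu_t$, getting a linear term $L^{i,N}_t$ with deterministic kernel $\frac{dB}{dm}(t,X^{i,\infty},\mu_t;\cdot)$ plus a second-order remainder $R^{i,N}_t$, and your treatment of $L^{i,N}_t$ by Proposition \ref{prop:SubGaussianMoment} is correct. The paper, by contrast, never Taylor-expands to second order around $\mu_t$; it keeps the first-order flat-derivative representation of $\triangle B^{i,N,(l)}_t$ and inserts, summand by summand, the $j$-deprived interpolation measure $\overline{\nu}^{-j,\alpha,N}_t$ into the kernel of $\frac{dB^{(l)}}{dm}$. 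This produces $I^{i,N}_t$ and $K^{i,N}_t$, each controlled \emph{pointwise} by the second flat derivative because the perturbation $\overline{\nu}^{\alpha,N}_t\to\overline{\nu}^{-j,\alpha,N}_t$ has total variation $O(1/N)$, and a term $J^{i,N}_t$ whose kernel no longer depends on $X^{j,\infty}$, which is therefore conditionally centred and is handled exactly as in Lemma \ref{lem:RatePathDependent}.

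The gap in your argument is precisely the one you flag but do not close: the bias estimate $|\EE_\PP[R^{i,N}_t\,|\,X^{i,\infty}]|\leq C_2K_2/N$. Your $R^{i,N}_t$ is $\iint \frac{d^2B}{dm^2}(t,X^{i,\infty},\mu^{\alpha\beta}_{N,t};\omega,\tilde\omega)(\overline{\mu}^{N,\infty}_t-\mu_t)^{\otimes 2}(d\omega,d\tilde\omega)$ integrated over $(\alpha,\beta)$, and the $U$-statistic cancellation of off-diagonal terms in $(\overline{\mu}^{N,\infty}_t-\mu_t)^{\otimes2}$ requires the kernel to be independent of the indices $j,k$ being integrated over. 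Here the kernel depends on $\overline{\mu}^{N,\infty}_t$, i.e.\ on \emph{all} $X^{j,\infty}$, so the off-diagonal expectations $\EE\big[\frac{d^2B}{dm^2}(\cdots,\mu^{\alpha\beta}_{N,t};X^{j,\infty},X^{k,\infty})\big]$ no longer vanish. Replacing the random kernel by the deterministic one $\frac{d^2B}{dm^2}(\cdot,\mu_t;\cdot,\cdot)$ would cost a term controlled only by a modulus of continuity of $d^2B/dm^2$ in its measure argument — effectively a third flat derivative — which Definition \ref{def:FlatDerivative} does not grant. Boundedness of $d^2B/dm^2$ alone only gives $|R^{i,N}_t|\leq K_2\Vert\overline{\mu}^{N,\infty}_t-\mu_t\Vert_{TV}^2$, which is $O(1)$, not $O(1/N)$, since the empirical and limit measures are mutually singular. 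To repair the argument one should perform the decoupling one derivative earlier, as in the paper: insert the $j$-deprived measure into the argument of $\frac{dB}{dm}$ rather than attempting to cancel the randomness inside $\frac{d^2B}{dm^2}$.
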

\begin{proof} For any $1\leq l\leq m$, using $\frac{d B^{(l)}}{d m}$, we have
\begin{align*}
&\triangle B^{i,N,(l)}_t:=B^{(l)}(t,(X^{i,\infty}_r)_{0\leq r\leq t},\overline{\nu}^N_t)-B^{(l)}(t,(X^{i,\infty}_r)_{0\leq r\leq t},\Ll((X^{i,\infty}_r)_{0\leq r\leq t}))\\
%
%&=\int_{0}^{1}\int_{\omega\in\Cc([0,T];\er^d)}\frac{dB^{(l)}}{dm}(t,(X^{i,\infty}_r)_{0\leq r\leq t},\overline{\nu}^{\alpha,N}_t
%)(\omega)\left(\overline{\nu}^N_t(d\omega)-\Ll((X^{i,\infty}_r)_{0\leq r\leq t})(d\omega)\right)\,d\alpha\\
%%
%&=\frac{1}{N}\sum_{j=1}^N\int_{0}^{1} \frac{dB^{(l)}}{dm}(t,(X^{i,\infty}_r)_{0\leq r\leq t},\overline{\nu}^{\alpha,N}_t
%)((X^{j,\infty}_r)_{0\leq r\leq t})\\
%%
%&\quad-\int_{0}^{1}\int_{\omega\in\Cc([0,T];\er^d)}\frac{dB^{(l)}}{dm}(t,(X^{i,\infty}_r)_{0\leq r\leq t},\overline{\nu}^{\alpha,N}_t)(\omega)\Ll((X^{i,\infty}_r)_{0\leq r\leq t})(d\omega)\,d\alpha,
\end{align*}
for $\overline{\nu}^{\alpha,N}_t=(1-\alpha)\overline{\nu}^{N}_t+\alpha\Ll((X^{i,\infty}_r)_{0\leq r\leq t})$.
In the first sum, for fixed $j$, define the (partial) empirical measure $\overline{\nu}^{-j,N}_t=\frac{1}{N-1}\sum_{l=1,l\neq j}^{N}\delta_{\{(X^{l,\infty}_r)_{0\leq r\leq t}\}}$. Adding and subtracting to the above,
\begin{align*}
&\frac{1}{N}\sum_{j=1}^N\int_{0}^{1} \frac{dB^{(l)}}{dm}(t,(X^{i,\infty}_r)_{0\leq r\leq t},\overline{\nu}^{-j,\alpha,N}_t
;(X^{j,\infty}_r)_{0\leq r\leq t})\\
&\quad-\int_{0}^{1}\int_{\omega\in\Cc([0,T];\er^d)}\frac{dB^{(l)}}{dm}(t,(X^{i,\infty}_r)_{0\leq r\leq t},\overline{\nu}^{-j,\alpha,N}_t
)(\omega)\Ll((X^{i,\infty}_r)_{0\leq r\leq t})(d\omega)\,d\alpha,
\end{align*}
for
\[
\overline{\nu}^{-j,\alpha,N}_t=(1-\alpha)\overline{\nu}^{-j,N}_t+\alpha\Ll((X^{i,\infty}_r)_{0\leq r\leq t}),
\]
we have the decomposition:
\[
\triangle B^{i,N,(l)}_t=I^{i,N,(l)}_t+J^{i,N,(l)}_t+K^{i,N,(l)}_t,
\]
where
\begin{align*}
I^{i,N,(l)}_t&:=\frac{1}{N}\sum_{j=1}^N\int_{0}^{1}\left( \frac{dB^{(l)}}{dm}(t,(X^{i,\infty}_r)_{0\leq r\leq t},\overline{\nu}^{\alpha,N}_t
;(X^{j,\infty}_r)_{0\leq r\leq t})-\frac{dB^{(l)}}{dm}(t,(X^{i,\infty}_r)_{0\leq r\leq t},\overline{\nu}^{-j,\alpha,N}_t
;(X^{j,\infty}_r)_{0\leq r\leq t})\right)\,d\alpha,
\end{align*}
\begin{align*}
J^{i,N,(l)}_t&:=\frac{1}{N}\sum_{j=1}^N\int_{0}^{1} \frac{dB^{(l)}}{dm}(t,(X^{i,\infty}_r)_{0\leq r\leq t},\overline{\nu}^{-j,\alpha,N}_t
;(X^{j,\infty}_r)_{0\leq r\leq t})\\
&\quad-\int_{0}^{1}\int_{\omega\in\Cc([0,T];\er^d)}\frac{dB^{(l)}}{dm}(t,(X^{i,\infty}_r)_{0\leq r\leq t},\overline{\nu}^{-j,\alpha,N}_t
)(\omega)\Ll((X^{i,\infty}_r)_{0\leq r\leq t})(d\omega)\,d\alpha,
\end{align*}
\begin{align*}
&K^{i,N,(l)}_t\\
&:=\int_{0}^{1}\int_{\omega\in\Cc([0,T];\er^d)}\left(\frac{dB^{(l)}}{dm}(t,(X^{i,\infty}_r)_{0\leq r\leq t},\overline{\nu}^{-j,\alpha,N}_t
;\omega)-\frac{dB^{(l)}}{dm}(t,(X^{i,\infty}_r)_{0\leq r\leq t},\overline{\nu}^{\alpha,N}_t
;\omega)\right)\Ll((X^{i,\infty}_r)_{0\leq r\leq t})(d\omega)\,d\alpha,
\end{align*}
Using the second order derivative $d^2B/dm^2$ and since
\[
\overline{\nu}^{\alpha,N}_t(d\omega)-\overline{\nu}^{-j,\alpha,N}_t(d\omega)=\frac{1}{N}\delta_{\{(X^{j,N}_r)_{0\leq r\leq t}\}}+\frac{1}{N(N-1)} \sum_{l=1,l\neq j}^N\delta_{\{(X^{l,N}_r)_{0\leq r\leq t}\in d\omega\}}
\]
we immediately get for $I^{i,N}_t$:
\begin{align*}
I^{i,N,(l)}_t&=\frac{1}{N}\sum_{j=1}^N\int_{0}^{1}\int_{0}^1 \frac{d^2B^{(l)}}{dm^2}(t,(X^{i,\infty}_r)_{0\leq r\leq t},(1-r)\overline{\nu}^{\alpha,N}_t+r\overline{\nu}^{\alpha,N}_t
;(X^{j,\infty}_r)_{0\leq r\leq t};\tilde{\omega}) \left(\overline{\nu}^{\alpha,N}_t(d\tilde{\omega})-\overline{\nu}^{-j,\alpha,N}_t(d\tilde{\omega})\right)\,d\alpha\,dr\\
&=\frac{1}{N^2}\sum_{j=1}^N\int_{0}^{1}\int_{0}^1 \frac{d^2B^{(l)}}{dm^2}(t,(X^{i,\infty}_r)_{0\leq r\leq t},(1-r)\overline{\nu}^{\alpha,N}_t+r\overline{\nu}^{\alpha,N}_t
;(X^{j,\infty}_r)_{0\leq r\leq t},(X^{j,\infty}_r)_{0\leq r\leq t})\,d\alpha\,dr\\
&\quad + \frac{1}{N^2(N-1)}\sum_{j=1}^N\sum_{l=1,l\neq j}^N\int_{0}^{1}\int_{0}^1 \frac{d^2B^{(l)}}{dm^2}(t,(X^{i,\infty}_r)_{0\leq r\leq t},(1-r)\overline{\nu}^{\alpha,N}_t+r\overline{\nu}^{\alpha,N}_t
;(X^{j,\infty}_r)_{0\leq r\leq t},(X^{l,\infty}_r)_{0\leq r\leq t})\,d\alpha\,dr.
\end{align*}
In the same way,
\begin{align*}
&K^{i,N,(l)}_t\\
&=\frac{1}{N}\sum_{j=1}^N\int_{0}^{1}\int_{0}^1 \frac{dB^{(l)}}{dm}(t,(X^{i,\infty}_r)_{0\leq r\leq t},(1-r)\overline{\nu}^{\alpha,N}_t+r\overline{\nu}^{\alpha,N}_t
;(X^{j,\infty}_r)_{0\leq r\leq t};\tilde{\omega}) \left(\overline{\nu}^{\alpha,N}_t(d\tilde{\omega})-\overline{\nu}^{-j,\alpha,N}_t(d\tilde{\omega})\right)\,d\alpha\,dr\\
&=\frac{1}{N^2}\sum_{j=1}^N\int_{0}^{1}\int_{0}^1 \frac{d^2B^{(l)}}{dm^2}(t,(X^{i,\infty}_r)_{0\leq r\leq t},(1-r)\overline{\nu}^{\alpha,N}_t+r\overline{\nu}^{\alpha,N}_t
;\omega,(X^{j,\infty}_r)_{0\leq r\leq t})\Ll((X^{j,\infty}_r)_{0\leq r\leq t})\,d\alpha\,dr\\
&\quad + \frac{1}{N^2(N-1)}\sum_{j=1}^N\sum_{l=1,l\neq j}^N\int_{0}^{1}\int_{0}^1 \frac{d^2B^{(l)}}{dm^2}(t,(X^{i,\infty}_r)_{0\leq r\leq t},(1-r)\overline{\nu}^{\alpha,N}_t+r\overline{\nu}^{\alpha,N}_t
;\omega,(X^{l,\infty}_r)_{0\leq r\leq t})\Ll((X^{j,\infty}_r)_{0\leq r\leq t})\,d\alpha\,dr.
\end{align*}
These estimates ensure directly that
\begin{align*}
\EE\left[\left(\int_{T_0}^{T_0+\delta}\left|I^{i,N,(l)}_t\right|^2\,dt\right)^p\right]\leq \frac{2^p\delta^p}{N^p}\Vert \frac{d^2 B}{dm ^2}\Vert^{2p}_{L^\infty},
\end{align*}
and
\begin{align*}
\EE\left[\left(\int_{T_0}^{T_0+\delta}\left|K^{i,N,(l)}_t\right|^2\,dt\right)^p\right]\leq \frac{2^p\delta^p}{N^p}\Vert \frac{d^2 B}{dm ^2}\Vert^{2p}_{L^\infty}.
\end{align*}
The final component $J^{i,N,(l)}$ can be estimated in the same way as in the proof of Lemma \ref{lem:RatePathDependent}.
\end{proof}

\paragraph{Acknowledgement:} %The study has been funded by the Russian Academic Excellence Project '5-100’.
This article was prepared within the framework of the Russian Academic Excellence Project '5-100'. The author is thankful to Lukasz Szpruch and Paul-Eric Chaudru de Raynal for having pointed out the use of linear functional derivative to derive the sufficient condition in Proposition \ref{prop:DifferentiabilityCondition}, and to Alexander Veretennikov for very fruitful discussions over the past year.

\section{Appendix}
\textbf{Carlen and Kr\'ee's optimal martingale moment control}:

\begin{theorem}[Carlen and Kr\'ee \cite{CarKre-91}, Theorem $A$]\label{thm:CarlenKree} For $p\geq 1$, define
\[
b_p=\sup_{(M_t)_{t\geq 0}}\left\{\frac{\EE\left[(M_t)^p\right]^{1/p}}{\EE\left[(\sqrt{\langle M\rangle_t})^p\right]^{1/p}}\right\},
\]
where the supremum is taken over the set of real valued bounded and continuous martingales $(M_t)_{t\geq 0}$. Then
\[
\sup_{p\geq 1}\frac{b_p}{\sqrt{p}}=2.
\]
\end{theorem}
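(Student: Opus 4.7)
The proof splits into a universal upper bound $b_p/\sqrt p \le 2$ for every $p\ge 1$ and a matching asymptotic lower bound $\limsup_{p\to\infty} b_p/\sqrt p \ge 2$. The common first step is to reduce the problem to stopped Brownian motion via the Dambis--Dubins--Schwarz theorem: any bounded continuous martingale $(M_t)$ with $M_0=0$ can be represented, on a possibly enlarged filtered probability space, as $M_t=B_{\langle M\rangle_t}$ for a standard Brownian motion $B$, with $\tau:=\langle M\rangle_t$ a bounded stopping time of $B$. The quantity defining $b_p$ then rewrites as
\[
b_p \;=\; \sup_\tau \frac{\bigl(\EE|B_\tau|^p\bigr)^{1/p}}{\bigl(\EE\tau^{p/2}\bigr)^{1/p}},
\]
the supremum ranging over bounded $B$-stopping times.

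For the upper bound I would follow the Burkholder--Bellman-function method. The aim is to construct, for each $p\ge 1$, a function $U_p:\er\times[0,\infty)\to\er$ satisfying $(i)$ $U_p(x,t)\ge |x|^p-(2\sqrt p)^p\,t^{p/2}$ pointwise, $(ii)$ $U_p(0,0)\le 0$, and $(iii)$ $U_p(B_t,t)$ is a supermartingale (equivalently, $U_p$ is superharmonic for the backward heat operator $\partial_t+\tfrac12\partial_{xx}$). Optional sampling at $\tau$ then gives $\EE|B_\tau|^p\le(2\sqrt p)^p\,\EE\tau^{p/2}$, whence $b_p\le 2\sqrt p$. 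The natural building block for $U_p$ is the spacetime-harmonic function $(x,t)\mapsto t^{p/2}H_p(x/\sqrt t)$, where $H_p$ is the probabilist's Hermite polynomial; harmonicity is a direct consequence of the Hermite ODE $H_p''(y)-yH_p'(y)+pH_p(y)=0$. Calibrating the construction at the largest positive root $\lambda_p$ of $H_p$ and extending affinely in $x$ beyond $\pm\lambda_p\sqrt t$ yields such a $U_p$, and the classical estimate $\lambda_p\le 2\sqrt p$ gives the desired constant; for non-integer $p$ the polynomial $H_p$ is replaced by a parabolic cylinder / Hermite function of real index.

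For the matching lower bound I would exhibit, for each $\varepsilon>0$ and for $p$ large enough, a stopping time $\tau$ with $\bigl(\EE|B_\tau|^p\bigr)^{1/p}\ge(2-\varepsilon)\sqrt p\,\bigl(\EE\tau^{p/2}\bigr)^{1/p}$. A natural candidate is the exit time of $B$ from the parabolic strip $\{|x|<\lambda_p\sqrt{1+t}\}$ started from the origin: on this $\tau$ the ratio $B_\tau/\sqrt{1+\tau}$ concentrates on the extremal value $\pm\lambda_p$, and the exact martingale identity $\EE[(1+\tau)^{p/2}H_p(B_\tau/\sqrt{1+\tau})]=H_p(0)$ coming from the same spacetime-harmonic function above allows one to compute the relevant moments and drive the resulting ratio up to $\lambda_p/\sqrt p$, which approaches $2$.

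The main obstacle is analytic: one needs both the uniform bound $\lambda_p\le 2\sqrt p$ and the sharp asymptotic $\lambda_p = 2\sqrt p + O(p^{-1/6})$ as $p\to\infty$ for the largest positive zero of $H_p$. These are classical but nontrivial consequences of the Plancherel--Rotach asymptotics for Hermite polynomials (equivalently, of the semicircle law for the rescaled zeros of $H_p$). Together with the two bounds above, they yield exactly $\sup_{p\ge 1}b_p/\sqrt p=2$.
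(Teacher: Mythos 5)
The paper does not prove this statement: it is quoted verbatim from Carlen and Kr\'ee \cite{CarKre-91} (Theorem~$A$) as an external input, and the only argument the paper supplies in the Appendix is the one-line truncation remark that lets one drop the boundedness of the martingale. So there is no ``paper's own proof'' to compare against; what you have written is a blind reconstruction of the Davis (1976) / Carlen--Kr\'ee argument, and on that score your outline has the correct skeleton (DDS reduction, a Burkholder--Bellman function built from the spacetime-harmonic pieces $t^{p/2}H_p(x/\sqrt t)$, calibration at the extreme zero of the Hermite function, and Plancherel--Rotach asymptotics to identify the constant $2$).

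There is, however, a genuine gap in the lower-bound half as you have stated it. Let $\tau$ be the exit time of $B$ from the parabola $\{|x|<\lambda_p\sqrt{1+t}\}$, where $\lambda_p$ is the largest zero of $H_p$. By the law of the iterated logarithm $\tau<\infty$ a.s., and on the exit boundary $H_p(B_\tau/\sqrt{1+\tau})=H_p(\pm\lambda_p)=0$ (for even integer $p$ both signs give zero). Your claimed identity
\[
\EE\big[(1+\tau)^{p/2}H_p(B_\tau/\sqrt{1+\tau})\big]=H_p(0)
\]
would then read $0=H_p(0)$, which is false for even $p$. This is not a paradox but a signal that the local martingale $(1+t)^{p/2}H_p(B_t/\sqrt{1+t})$ is not uniformly integrable up to $\tau$ (indeed $\EE\,\tau^{p/2}=\infty$ at the critical opening $\lambda_p$), so optional stopping simply does not apply, and the ratio you want to compute is $\infty/\infty$. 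The correct route --- the one Davis uses --- is to stop at the exit from a strictly narrower parabola $\{|x|<a\sqrt{1+t}\}$ with $a<\lambda_p$ (where the stopped process is a genuine $L^p$ martingale and both $\EE|B_{\tau_a}|^p$ and $\EE\,\tau_a^{p/2}$ are finite), identify the Laplace transform of $\tau_a$ with the Hermite/parabolic-cylinder function, and then let $a\uparrow\lambda_p$ so the ratio tends to $\lambda_p^p$. As written, your construction cannot produce the lower bound.

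Two smaller points. First, in the upper bound the Bellman function gives $b_p\le\lambda_p$, and the step $\lambda_p\le 2\sqrt p$ is not a one-line ``classical estimate'': for integer $p$ it follows from the fact that the Plancherel--Rotach correction to the edge of the semicircle is negative (largest zero $=2\sqrt p-\,c\,p^{-1/6}+o(p^{-1/6})$ with $c>0$), and extending this monotonicity to all real $p\ge1$ via parabolic-cylinder functions is precisely one of the nontrivial inputs Carlen and Kr\'ee supply; it deserves to be flagged as such rather than waved through. Second, for non-integer $p$ the Hermite function is not even/odd, so ``the largest positive root'' and the two-sided exit boundary $\pm\lambda_p\sqrt t$ need to be handled separately (the calibration has to be done on both sides, or one restricts the lower-bound computation to even integers, which suffices since only the $\limsup$ is needed).
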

The boundedness condition, assumed in Carlen and Kr\'ee \cite{CarKre-91}, can be easily dropped, thanks to a truncation argument, to state the generic inequality:
\begin{equation}\label{proofst:h}
\EE\left[(M_t)^p\right]^{1/p}\leq 2\sqrt{p}\EE\left[(\sqrt{\langle M\rangle_t})^p\right]^{1/p}\,\text{whenever}\,\EE\left[(\sqrt{\langle M\rangle_t})^p\right]<\infty.
\end{equation}
Indeed, given $(M_t)_{t\geq 0}$ a continuous $L^p$-finite martingale and introducing the stopping time $\tau_\lambda=\inf\{t>0\,:\,|M_t|\geq \lambda\}$, the truncated process $(M_{t\wedge\tau_\lambda};\,t\geq 0)$ is bounded, so that
\[
\EE\left[(M_{t\wedge \tau_\lambda})^p\right]^{1/p}\leq 2\sqrt{p}\EE\left[(\sqrt{\langle M\rangle_{t\wedge \tau_\lambda}})^p\right]^{1/p}.
\]
Taking the limit $\lambda\rightarrow \infty$, we conclude \eqref{proofst:h}

\noindent
\textbf{Proof of \eqref{proofstp:i}:}

From this proposition, we deduce the following corollary that can be simply deduced from [Theorem $7.7$, Lipster and Shiryaev \cite{LipShi-01}]:
\begin{corollary}\label{coro:DensityTwoDiff} Let $(\zeta^1_t)_{0\leq t\leq T}$ and $(\zeta^2_t)_{0\leq t\leq T}$ be two It\^o diffusion processes defined a filtered probability space $(\Omega,\Ff,(\Ff_t)_{t\geq 0},\PP)$, satisfying
\[
d\zeta^i_t=\alpha_i(t,\zeta^i)\,dt+dW^i_t,\,\zeta_0=0,\,0\leq t\leq T,,\,i=1,2,
\]
Then assuming that
\[
\PP\left(\int_0^T\left|\alpha_1(t,\zeta^1)\right|^2\,dt +\int_0^T\left|\alpha_2(t,\zeta^2)\right|^2\,dt<\infty\right)=1,
\]
and
\[
\PP\left(\int_0^T\left|\alpha_1(t,W^1)\right|^2\,dt+\int_0^T\left|\alpha_2(t,W^2)\right|^2\,dt<\infty\right)=1,
\]
the probability measures $P_{\zeta_1}$ and $P_{\zeta_2}$ are equivalent and
\[
\frac{dP_{\zeta^1}}{dP_{\zeta^2}}(T,\zeta^2)=\exp\left\{-\int_0^T \left(\alpha_1(t,\zeta_2)-\alpha_2(t,\zeta_2)\right)\cdot \,d\zeta^2_t-\frac{1}{2}\int_0^T \left|\alpha_1(t,\zeta^2)-\alpha_2(t,\zeta^2)\right|^2\,dt
\right\},
\]
\[
\frac{dP_{\zeta^2}}{dP_{\zeta^1}}(T,\zeta^1)=\exp\left\{-\int_0^T \left(\alpha_2(t,\zeta_1)-\alpha_2(t,\zeta_1)\right)\cdot \,d\zeta^1_t-\frac{1}{2}\int_0^T \left|\alpha_2(t,\zeta^1)-\alpha_1(t,\zeta^1)\right|^2\,dt\right\}.
\]
\end{corollary}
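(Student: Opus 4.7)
The plan is to invoke the Girsanov transformation (in the precise form of Theorem 7.7 of Liptser--Shiryaev \cite{LipShi-01}) separately for each of the two diffusions, identify each law $P_{\zeta^i}$ on $(\Cc([0,T];\er^m),\Bb(\Cc([0,T];\er^m)))$ as an absolutely continuous transform of the Wiener measure $P_{W^i}$, and then combine the two resulting densities via the chain rule for Radon--Nikodym derivatives.

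First I would observe that the two integrability hypotheses
\[
\PP\Bigl(\int_0^T |\alpha_i(t,\zeta^i)|^2\,dt<\infty\Bigr)=1,\qquad \PP\Bigl(\int_0^T |\alpha_i(t,W^i)|^2\,dt<\infty\Bigr)=1,\,i=1,2,
\]
are exactly the pair of assumptions required by Theorem 7.7 of \cite{LipShi-01} to guarantee that the Dol\'eans--Dade exponential
\[
\Ee^i_T:=\exp\left\{\int_0^T \alpha_i(t,W^i)\cdot dW^i_t-\tfrac{1}{2}\int_0^T |\alpha_i(t,W^i)|^2\,dt\right\}
\]
is a true martingale (not merely a local one) of unit expectation, and that the induced path measure $P_{\zeta^i}$ is mutually equivalent to the Wiener measure $P_{W^i}$ with explicit density
\[
\frac{dP_{\zeta^i}}{dP_{W^i}}(\omega)=\exp\left\{\int_0^T \alpha_i(t,\omega)\cdot d\omega_t-\tfrac{1}{2}\int_0^T |\alpha_i(t,\omega)|^2\,dt\right\},\quad \omega\in\Cc([0,T];\er^m).
\]
Here the first condition is what makes the stochastic integral against $d\omega_t$ meaningful $P_{\zeta^i}$-a.s., while the second is what upgrades the Dol\'eans exponential to a genuine martingale and certifies the absolute continuity in both directions.

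Second, since both $P_{\zeta^1}$ and $P_{\zeta^2}$ are equivalent to the common reference Wiener measure, they are mutually equivalent, and the chain rule yields, for $\omega$ a generic continuous path,
\[
\frac{dP_{\zeta^1}}{dP_{\zeta^2}}(\omega)=\frac{(dP_{\zeta^1}/dP_W)(\omega)}{(dP_{\zeta^2}/dP_W)(\omega)}=\exp\left\{\int_0^T (\alpha_1-\alpha_2)(t,\omega)\cdot d\omega_t-\tfrac{1}{2}\int_0^T \bigl(|\alpha_1|^2-|\alpha_2|^2\bigr)(t,\omega)\,dt\right\}.
\]
Specializing to $\omega=\zeta^2$ and injecting the SDE decomposition $d\zeta^2_t=\alpha_2(t,\zeta^2)\,dt+dW^2_t$ rearranges the cross-term $\int_0^T(\alpha_1-\alpha_2)(t,\zeta^2)\cdot \alpha_2(t,\zeta^2)\,dt$ with $\tfrac{1}{2}\int_0^T(|\alpha_1|^2-|\alpha_2|^2)(t,\zeta^2)\,dt$ into the desired $-\tfrac{1}{2}\int_0^T |\alpha_1-\alpha_2|^2(t,\zeta^2)\,dt$, yielding the first displayed identity. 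The second identity is obtained symmetrically by exchanging the roles of $\zeta^1$ and $\zeta^2$.

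The only real point of caution—and therefore the main obstacle—is checking that the second integrability hypothesis (along the Brownian paths $W^i$, rather than along $\zeta^i$) is what permits us to bypass a Novikov-type check: this is precisely where Theorem 7.7 of \cite{LipShi-01} is doing the heavy lifting, producing $\EE[\Ee^i_T]=1$ without imposing any exponential moment assumption. Once that is acknowledged, the remainder is an algebraic rearrangement of the exponents and needs no further stopping or truncation argument.
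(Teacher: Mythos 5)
Your route is exactly the one the paper intends: the paper's ``proof'' is the single sentence deducing the corollary from Theorem 7.7 of Liptser--Shiryaev, and you have supplied the missing content — two applications of that theorem against the common Wiener reference measure, followed by the chain rule for Radon--Nikodym derivatives, with the two integrability hypotheses correctly matched to the two hypotheses of Theorem 7.7. That much is right.

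The one point I would not let pass is the assertion that the final rearrangement ``yields the first displayed identity.'' Carry your own algebra through: starting from
$\exp\bigl\{\int_0^T(\alpha_1-\alpha_2)(t,\omega)\cdot d\omega_t-\tfrac12\int_0^T\bigl(|\alpha_1|^2-|\alpha_2|^2\bigr)(t,\omega)\,dt\bigr\}$,
evaluating at $\omega=\zeta^2$ and substituting $d\zeta^2_t=\alpha_2\,dt+dW^2_t$ gives the cross term $\int_0^T(\alpha_1-\alpha_2)\cdot\alpha_2\,dt$, and combining this with $-\tfrac12\int_0^T(|\alpha_1|^2-|\alpha_2|^2)\,dt$ produces $-\tfrac12\int_0^T|\alpha_1-\alpha_2|^2\,dt$, so that the net result is
$\exp\bigl\{+\int_0^T(\alpha_1-\alpha_2)(t,\zeta^2)\cdot dW^2_t-\tfrac12\int_0^T|\alpha_1-\alpha_2|^2(t,\zeta^2)\,dt\bigr\}$.
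This is a positive sign against $dW^2$, not a negative sign against $d\zeta^2$ as the corollary's display writes; the two expressions are not equal in general (and indeed the paper's display is not a mean-one exponential under $\PP$). A sanity check with $\alpha_1\equiv\theta$, $\alpha_2\equiv0$, $\zeta^2=W^2$ confirms the density must be $\exp\{\theta W^2_T-\tfrac12\theta^2T\}$, matching the $+\int\cdot dW^2$ form. The corollary's displayed formula (which also contains the visible typo $\alpha_2(t,\zeta_1)-\alpha_2(t,\zeta_1)$ in the second identity) mixes the $d\zeta$-normal form, whose quadratic correction should be $|\alpha_1|^2-|\alpha_2|^2$, with the $dW$-normal form, whose quadratic correction is $|\alpha_1-\alpha_2|^2$, and flips a sign. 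Your derivation is the correct one; rather than claim agreement with the display as printed, you should record that your computation yields the $+\int(\alpha_1-\alpha_2)\cdot dW^2$ form and note the discrepancy with the statement, which is consistent with the form actually used downstream in \eqref{proofstp:i} up to that same sign.
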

Applying the preceding corollary to \eqref{eq:McKeanVlasovParticle} and \eqref{eq:Nparticles},
we deduce \eqref{proofstp:i} by applying two successive Girsanov transformations, first mapping the $\er^{dN}$-valued process:
\begin{equation*}
(X^{1,\infty}_t,\dots,X^{N,\infty}_t)_{0\leq t\leq T},
\end{equation*}
into a system of $N$ (independent) copies of the solution to \eqref{eq:IntermediateSDE}. The interaction between the component is then introduced by a second Girsanov transformation yielding to \eqref{eq:Nparticles}.

\end{document}